\definecolor{darkred}{rgb}{.7,0,0}
\definecolor{darkgreen}{rgb}{0.3,0.6,0.0}
\definecolor{darkblue}{rgb}{0,0,0.7}
\renewcommand{\mathfrak}{\mathsf}
\newcommand{\mmu}{\nu}
\newcommand{\cG}{\mathcal{G}}
\newcommand{\cF}{\mathcal{F}}
\crefname{hypothesis}{Hypothesis}{Hypotheses}
\title{Diffusive Optical Tomography in the Bayesian Framework\thanks{Submitted to the editors DATE.
\funding{This work was funded by AFOSR Grant FA9550-17-1-0185, NSF DMS 1619778 and 1750488, and NSF TRIPODS 1740707.}}}
\author{Kit Newton\thanks{Department of Mathematics, University of Wisconsin-Madison
  (\email{kcnewton@math.wisc.edu}, \url{http://kitcnewton.github.io}).}
\and Qin Li\thanks{Department of Mathematics, University of Wisconsin-Madison
  (\email{qinli@math.wisc.edu}).}
\and Andrew M. Stuart\thanks{Computing $+$ Mathematical Sciences, California Institute of Technology
  (\email{astuart@caltech.edu}).}}%\footnotemark[3]}
\newcommand{\ssigma}{\alpha}
\newcommand{\eps}{\epsilon}
\newcommand{\lin}{\textrm{lin}}
\newcommand{\NullL}{\textrm{Null}\,\mathcal{L}}
\newcommand{\cov}{\mathcal C}
\newcommand{\post}{\text{post}}
\newcommand{\prior}{\text{prior}}
\newcommand{\rd}{\textrm{d}}
\newcommand{\RTE}{\textrm{RTE}}
\newcommand{\DE}{\textrm{DE}}
\newcommand{\Kn}{\epsilon}
\newcommand{\expDEm}{\mathbb E^{\mmu^{\mathfrak y}_\DE}}
\newcommand{\expRTEm}{\mathbb E^{\mmu^{\mathfrak y}_\RTE}}
\newtheorem{thm}{Theorem}[section]
\newtheorem{prop}[thm]{Proposition}
\newtheorem{lem}[thm]{Lemma}
\newtheorem{cor}[thm]{Corollary}
\theoremstyle{definition}
\newtheorem{assumption}[thm]{Assumption}
\DeclareMathOperator{\KL}{d_\text{KL}}
\DeclareMathOperator{\Hell}{d_\text{Hell}}
\begin{document}
%
%\title{Notes on diffusive optical tomography in Bayesian framework}
%
%\author{K. Newton, Q. Li, and A. Stuart}

 \maketitle

\begin{abstract}
Many naturally-occuring models in the sciences are well-approximated
by simplified models, using multiscale techniques. In such settings
it is natural to ask about the relationship between inverse problems
defined by the original problem and by the multiscale approximation.
We develop an approach to this problem and exemplify it in the context
of optical tomographic imaging.

Optical tomographic imaging is a technique for infering the properties of 
biological tissue via measurements of the incoming and outgoing light intensity; it may be used as a
medical imaging methodology. Mathematically, light propagation is modeled 
by the radiative transfer equation (RTE), and optical tomography amounts to 
reconstructing the scattering and the absorption coefficients in the RTE 
from boundary measurements. We study this problem in the Bayesian framework, 
focussing on the strong scattering regime. In this regime the forward 
RTE is close to the diffusion equation (DE). 
We study the RTE in the asymptotic regime where the forward problem
approaches the DE, and prove convergence of the inverse RTE to the inverse DE in both nonlinear 
and linear settings. Convergence is proved by studying the distance between 
the two posterior distributions using the Hellinger metric, and using
Kullback-Leibler divergence.

\end{abstract}

\section{Introduction}
\label{sec:I}

\subsection{Background}

%\optical tomography intro paragraph
Optical imaging is one of many medical imaging techniques that uses light to probe tissue structure \cite{ren,balreview}. Near infrared light is sent into biological tissue, and the outgoing photon current at the surface of the tissue is then measured. Using these measurements, it is possible to infer properties of the tissue. While traditional imaging methods such as X-ray imaging provide good reconstructions of the tissue's properties, they are typically more expensive and more invasive than optical imaging. Optical imaging can be used for brain, breast, and joint imaging, as well as monitoring blood oxygenation \cite{hielscher1,hielscher2,hielscher_tissue}.

%math behind optical tomography
To study optical imaging mathematically, one may use the radiative transfer equation (RTE). The forward RTE describes the dynamics of photons in materials with 
given optical properties. We denote the distribution of particles at location $x$ with velocity $v$ by $f(x,v)$, where $x \in \Omega \subset \mathbb R^d$, $d=2,3$, and $v \in \mathbb S^{d-1}$, the unit sphere in $\mathbb R^d$. The model
enforces particle motion with constant unit speed, and the velocity affects the problem only through the direction of travel of the particle. 
 The optical properties are characterized by two parameters, the scattering coefficient and the absorption coefficient.
The scattering coefficient, denoted $k(x,v,v')$, determined by the probability of 
a photon, currently moving in direction $v$ at position $x$, scattering off a particle 
in the material and changing direction to $v'$. The total absorption coefficient, denoted by $\ssigma(x,v)$, accounts for photon absorbtion into the material 
where they are lost. With this notation established, the RTE is, in its most general form, 
\begin{equation}
\label{gen_rte}
v \cdot \nabla f(x,v) = \int_{\mathbb S^{d-1}} k(x,v,v') f(x,v') \rd v' - \ssigma(x,v) f(x,v).
\end{equation}
Here the gradient operation is with respect to $x$,
as are related contractions of the gradient to a divergence. 
The forward problem is to determine the particle distribution function $f$,
given the optical properties of the medium as characterized by $k$ and $\ssigma.$
Optical imaging amounts to solving a related inverse problem: the map from incoming data (light intensity injected into the tissue) to the measured outgoing data (light intensity collected outside the tissue) is termed the albedo operator, and the absorption and scattering coefficients in RTE are reconstructed from 
knowledge of the albedo operator. There are a number of theoretical 
results concerning the inverse RTE, primarily focussed on the setting
in which the entire albedo map is known: it was shown in~\cite{choulliandstef2}
that the medium is uniquely recoverable in dimension $d=3$, and then 
in~\cite{wang} that the reconstruction is Lipschitz stable. For further
literature surveys see the reviews in~\cite{baljollivet,balmonard}.

Another model for photon dynamics is the diffusion equation (DE). The diffusion
equation typically governs lower energy photons than the RTE, leading to a 
larger scattering effect and less absorption. Let $\rho(x)$ denote the light 
intensity at location $x$ where, as before, $x\in\Omega\subset \mathbb R^d$, 
and let $a(x)$ denote the diffusion coefficient. Then the DE is
\begin{equation*}
-\nabla \cdot\left( a(x)\nabla \rho(x) \right)  =0.
\end{equation*}
In this setup, the map from the Dirichlet data (light intensity injected into the tissue) to the Neumann data (light propagating out) is termed the Dirichlet-to-Neumann (DtN) map, and is used to reconstruct the diffusion coefficient $a(x)$.
Using the DtN map to reconstruct the medium for the elliptic equation is the 
famous Calder\'on problem, widely studied from a
theoretical perspective. Two foundational papers are~\cite{uhlmann}, where
the uniqueness was shown, and~\cite{alessandrini} in which logarithmic 
stability of the inversion was demonstrated. The review \cite{borcea2002electrical} contains
further citations to literature in this area.

It is natural to examine the relation between the two forward models, and to 
understand, from both a physical as well as a mathematial perspective, why 
they give distinct stability performances in the inverse problem. When
$a(x)$ and $(k(x,v,v'),\ssigma(x,v))$ satisfy certain relations, the two 
forward models are asymptotically ``close" when the laser beam is composed 
of low-energy photons. 
In the forward setting, physically, high-energy photons experience little 
scattering before leaving the domain, while in comparison, low-energy photons are scattered frequently by the tissue before being released and measured at the boundary. As a consequence, high-energy photons present a crisp resolution, and the images from low-energy photons are rather blurred. Mathematically, the RTE is taken as the correct forward model, and we can use the Knudsen number to present the number of times that an average photon scatters. In the low-energy regime, the number of times the photon scatters increases, the Knudsen number shrinks to zero, and the RTE converges to the DE in the forward setting. Correspondingly, the inverse RTE is expected to converge to the inverse DE: meaning the information carried in the albedo operator is almost the same as that in the DtN map, and the reconstruction should also converge. This has been numerically observed in~\cite{arridge,hielscher_diffusion,main}, and proved rigorously in~\cite{LiLaiUhlmann}.

The literature refered to thus far focusses on settings in which the
entire albedo or DtN map is known, and this leads to a deep mathematical
theory. However it is arguably far from the practical setting in which
partial and noisy information about these mappings is all that is
available.  The Bayesian formulation of the inverse problem is
useful in this setting as it allows for incorporation of
prior information, partial obeservation and noise level in
a natural fashion. The practicality of this approach was
demonstrated in the monograph \cite{kaipio2006statistical},
and recent work has led to a mathematical framework
\cite{lasanen2012non,lasanen2012nonb,stuart} suitable for
well-posedness analyses \cite{hellinger} and  computations
which blend state-of-the-art computational PDE and computatioal statistical
approaches \cite{ghattas,cotter2013mcmc}. In the Bayesian approach to the solution of the 
inverse problem, all quantities are viewed as random variables, and
the solution is the probability distribution of the unknown quantity 
conditioned on the data \cite{hellinger}. Bayes' theorem allows 
determination of this conditional distribution (the posterior)
from the prior distribution on the unknown and from the likelihood, 
the distribution of the data conditioned on fixing the unknown. 
Our work is focussed on understanding the relationship between the
two inverse problems in this Bayesian setting.

\subsection{Our Contribution}

The goal of this paper is to connect the two inverse problems in optical 
imaging, and specifically to prove convergence of the inverse RTE to the 
inverse DE in the Bayesian framework. Multiscale techniques provide
the desired estimates on the forward problem and we show how these
may be transfered to the Bayesian inverse problems. To this end we make the
following contributions: 

\begin{itemize}

\item we provide multiscale-based error estimates which relate
the solution of the forward
problems for the RTE and DE -- see Theorem \ref{thm:convergence} for which
we provide a formal asymptotic justification in the main body of the paper,
and a rigorous proof in the appendix; 

\item we compare the two posterior distributions for the RTE and DE, 
measuring distance between them in the Kullback-Leibler divergence and the Hellinger distance in the optically thick regime (zero limit of the Knudsen number)
when the scattering coefficient is large -- see Theorems \ref{t:KL1}
and \ref{thm:hellinger_nonlinear};

\item we extend the convergence result linking posterior distributions
to a setting in which the albedo operator's dependence on the medium 
is approximated by linearization -- see Theorem \ref{t:H2} and 
Corollary \ref{c:H2}.

\end{itemize}

The approach we adopt will apply to other Bayesian inverse problems
whose forward models are linked through multiscale analyses.
The paper is organized as follows. In section \ref{sec:S} we provide 
the mathematical setting for our work, including discussion of the
Bayesian formulation of inverse problems, and the diffusion limit 
of the radiative transfer equation. In section \ref{sec:N} 
we estimate the distance between the Bayesian solution
of the RTE and DE inverse problems, and in section \ref{sec:L}
we address the same question in the linearized setting. 
We conclude in section \ref{sec:C}.

\section{The Setting}
\label{sec:S}

In this section we establish the mathematical framework within which
all our results are derived. In subsection \ref{bayes} we describe
Bayesian inverse problems in general. We then discuss the setting of
linear inverse problems with Gaussian priors and Gaussian additive noise,
in which the posterior is also Gaussian; and we discuss linearization
of the forward operator to obtain an approximate
Gaussian posterior. Subsection \ref{asymp} 
describes the forward problems from the RTE and for the DE, providing
error estimates linking their solutions in the small Knudsen number
regime. In subsection \ref{invintro} we formulate the Bayesian
inverse problem for the RTE and DE.
Subsection \ref{lin} is denoted to linearization of the forward
operator for RTE and DE, and hence forms the basis for defining an
approximate Gaussian posterior distributions for their respective
inverse problems.

\subsection{Bayesian Inversion} \label{bayes}

Consider the inverse problem of finding $\sigma$ from $y$ where
\begin{equation}
y = \mathcal G(\sigma) + \eta\,, \label{abstractinvproblem}
\end{equation}
$\mathcal G$ is a known forward map that takes parameter to the 
data space, and $\eta$ a noise pollution. In the 
Bayesian formulation of inversion $y,\sigma$ and $\eta$ are viewed
as random variables, linked by equation \eqref{abstractinvproblem}, 
and assumption that the prior distribution on $\sigma$, 
denoted by $\mu_0$, and the distribution of the noise $\eta$,
denoted by $\mu_\text{error}$, are known. The objective
is to find the conditional distribution on $\sigma$ given $y$
(denoted $\sigma|y$.) In this paper we will assume that
$\eta$ is independent of $\sigma$ {\em a priori} and denote
the distribution of $y$ given $\sigma$, which is then a translation
by $\mathcal{G}(\sigma)$ of $\mu_\text{error}(\eta)$, by $\mu^{\sigma}(y).$
We will concentrate on the commonly occuring case in which $\eta$ is
in a function space and the data $y$ is finite dimensional; then
$\mu_\text{error}(\eta)$ may be identified with its Lebsegue density,
whilst $\mu_0$ and $\mu^y$ are measures on a separable Banach space.

If we denote by $\mu^y$ the posterior distribution on $\sigma$ given observation
$y$ then Bayes' theorem gives 
\begin{equation}\label{bayestheorem}
\mu^y(d\sigma) = \tfrac{1}{Z}\mu^\sigma(y) \mu_0(d\sigma),
\end{equation}
where
\[
Z= \int_{X} \mu^\sigma(y)\rd\mu_0(d\sigma)
\]
and $X$ is a subset of a separable Banach space which contains the
support of $\mu_0$; then $Z$ normalizes $\mu^y$ to a  probability density.
If we make the additional assumption that both $\mu_0$
and $\mu_\text{error}(\eta)$ are Gaussian and finite dimensional
so that $\mu_0=\mathcal N(m_0,\cov_\prior)$
and $\mu_\text{error}(\eta)=N(0,\mathcal C_\text{prior})$ then 
we may write a formula for the Lebesgue density of the posterior:
\begin{equation}\label{bayestheorem2}
\mu^y(\sigma) = 
\tfrac{1}{Z}\exp \left( - \left( \sigma - m_0 \right)^\top \mathcal C_\text{prior}^{-1} \left( \sigma-m_0\right) - \left( y- \mathcal G(\sigma) \right)^\top\mathcal C_\text{error}^{-1} \left( y- \mathcal G(\sigma)\right)\right)\,.
\end{equation}
We note that analogous formulae are also available in the infinite dimensional case; see \cite{stuart, hellinger} and the references therein.

In optical tomography, one has two fundamental models for describing light 
propagation: the radiative transfer equation (RTE), and the diffusion 
equation (DE). We will denote the solution of the respective inverse
problems by $\mu^{y}_\DE(\sigma)$ and $\mu^{y}_\RTE(\sigma)$. 
This paper is primarily concerned with showing that these two distributions
are close in the small Knudsen number regime, and quantifying the difference. 
There are multiple ways to quantify the distance between two probability distributions $\mu$ and $\mu'$. We will use the
Kullback-Leibler (KL) divergence and the Hellinger distance. 
If $\mu$ has density with respect to $\mu'$ and $\mu$ has support in $X$
defined as above, then the KL divergence is given by
\begin{equation*}
\KL(\mu,\mu') = \int_{X}  \log \frac{d\mu}{d\mu'}(\sigma) \rd \mu(d\sigma);
\end{equation*}
if $\mu$ and $\mu'$ have density with respect to common reference
measure $\lambda$, with support in $X$ 
defined as above, then the Hellinger distance is given by
\begin{equation*}
d_\text{Hell}(\mu,\mu')^2 = \frac{1}{2} \int_{X} \left( \sqrt{\frac{d\mu}{d\lambda}} - \sqrt{\frac{d\mu'}{d\lambda}}\right)^2 \rd \lambda.
\end{equation*}
These formulae have interpretations in the infinite dimensional setting; see the appendix of \cite{hellinger}. The KL divergence has an information theoretic interpretation which makes it attractive. However, the Hellinger metric is particularly useful because, for square integrable test functions, it translates directly into bounds of differences of expectations of test functions, see Lemma 7.14 in \cite{hellinger}. The square root of the KL divergence bounds the Hellinger metric, but often sharper bounds on differences of expectations of test functions are obtained by studying the Hellinger distance directly.
KL divergence was used to quantify the error incurred when approximating
posterior distributions in \cite{marzouk2009stochastic} in finite dimensions,
and the Hellinger metric was used in \cite{cotter2010approximation} in
the infinite dimensional setting required in this paper.

In some contexts the unknown $\sigma$ is naturally a positive random variable and so
we seek instead $u$ where $\sigma=\exp(u)$. If we define $\cF=\cG \circ \exp(\cdot)$
then the inverse problem \eqref{abstractinvproblem} becomes 
\begin{equation}
\label{abstractinvproblem_2}
y=\cF(u)+\eta.
\end{equation}
Often we have an approximate solution $u_0$ to \eqref{abstractinvproblem_2}
and it is natural to seek a solution which deviates slightly from this. In this
situation we write $u=u_0+v$ and linearize \eqref{abstractinvproblem_2} to obtain
$$y\approx \cF(u_0)+D\cF(u_0)v+\eta.$$
This suggests studying the linear inverse problem
\begin{equation}
\label{abstractinvproblem_lin}
z=Gv+\eta
\end{equation}
where $z=y-\cF(u_0)$ and $G=D\cF(u_0).$
If we put  Gaussian prior $\mathcal N(m_\prior,\cov_\prior)$ on $v$ then the posterior 
on $v|z$ is also Gaussian $\mathcal N(m_\post,\cov_\post)$
determined by 
\begin{equation}
\mathcal C_\post^{-1} = \mathcal C_\prior^{-1} + G^T \mathcal C_\text{error}^{-1} G\,,\quad\text{and}\quad  m_\post = m_\prior + \mathcal C_\post G^T \mathcal C_\text{error}^{-1} \left(z -G m_\prior \right).
 \label{stuart620}
\end{equation}
These formulae can also be interpreted in the infinite dimensional setting; 
see \cite{lehtinen1989linear} and further citations 
in \cite{stuart,hellinger,stuart2015}.

When Bayesian inversion is based on a nonlinear forward model, 
characterization of the resulting non-Gaussian
posterior distribution can be quite complicated, requiring MCMC or SMC 
techniques \cite{taeb}. One possible approach to
deal with this is to perform the linearization described above and work
with Gaussian priors and posterior distributions, leading to closed
form solutions. These can be augmented with constraints by means of rejection
sampling based on independent sampling from the Gaussian posterior.

\subsection{Diffusion Limit Of The RTE}\label{asymp}
We consider the RTE \eqref{gen_rte} in the setting where the
the absorption coefficient $\alpha$ and the scattering coefficient $k(x,v,v')$ are set to
$$\alpha(x,v) = k(x,v,v')=\Kn^{-1}\sigma(x),$$
where $\Kn$ is the Knudsen number. 
The thickness of the material physically corresponds to the number of times a photon scatters between being injected in a medium and escaping. The physical quantity is termed the Knudsen number, which stands for the ratio of mean free path and the domain length. The mean free path is the average distance a particle travels before being scattered. When the Knudsen number is small, photons, on average, scatter many times before they are emitted, and the material is thus regarded as optically thick.

In equation \eqref{gen_rte} $\rd v$ denotes normalized unit measure, meaning
\begin{equation*}
\braket{1}_v=\int_{\mathbb S^{d-1}} 1\rd v = 1\,,
\end{equation*}
where we have used the notation $\braket{\cdot}_v$ to denote normalized integration over $v$.
Thus equation \eqref{gen_rte} may be written as 
\begin{equation*}
v \cdot \nabla f = \frac{1}{\Kn}\sigma \mathcal L f, \label{newrte}
\end{equation*}
where the collision operator is
\begin{equation}\label{eqn: bam}
\mathcal L f = \int_{\mathbb S^{d-1}} f(x,v') \rd v' - f = \braket{f}_v -f\,.
\end{equation}

To ensure a unique solution we impose an incoming boundary condition, 
the analogue of a Dirichlet boundary condition for equations lacking 
velocity dependence. To this end define 
\begin{equation*}
\Gamma_\pm =  \{ (x,v): x \in \partial \Omega, \pm v \cdot n_x>0 \}\,
\end{equation*}
which denotes the collection of coordinates on the boundary 
$x\in\partial\Omega$ on which the velocity $v$ points in/out of the domain,
respectively where $\pm v\cdot n_x>0$. Here $n_x$ is the normal vector at point $x$ pointing out of $\Omega$. The incoming boundary condition is imposed on $\Gamma_-$. We also define, for any fixed $ y\in\partial\Omega$,
\begin{equation*}
\Gamma_\pm(y) = \{ (x,v): x=y, \pm v \cdot n_y>0 \}\,,
\end{equation*}
and set
\begin{equation*}
\Gamma = \Gamma_+\cup\Gamma_-\,,\quad\text{and}\quad \Gamma(y) = \Gamma_+(y)\cup\Gamma_-(y)\,.
\end{equation*}
For a unique solution to \eqref{gen_rte}, boundary conditions must be imposed on $\Gamma_-$ as follows:
\begin{equation*}
f|_{\Gamma_-} = \phi(x,v)\,.
\end{equation*}

Combining the foregoing considerations we obtain
\begin{equation}\label{rteknud}
\begin{cases}
v \cdot \nabla f = \frac{1}{\Kn} \sigma \mathcal L f\,,\quad (x,v)\in\Omega\times\mathbb{S}^{d-1}\\
f|_{\Gamma_-} = \phi(x,v)
\end{cases}\,,
\end{equation}
with $\mathcal{L}$ as defined in \eqref{eqn: bam}. The domain $\Omega$ has a smooth $C^1$ boundary $\partial\Omega$. In the small $\Kn$ regime, it was conjectured in~\cite{diff_limit} and then proved in~\cite{diff_limit_proof} that the equation is asymptotically close to the 
following diffusion equation: 
\begin{equation}\label{eqn:diff_limit}
\begin{cases}
-\nabla \cdot \left( \frac{1}{\sigma} \nabla \rho \right)  = 0, \quad x\in\Omega\subset\mathbb{R}^d\\
\rho \big|_{\partial \Omega} = \xi(x)
\end{cases}\,,
\end{equation}

We make this convergence explicit under the following assumptions.
\begin{assumption}\label{ass:sigma}
The functions $\sigma, \phi$ and $\xi$ characterizing the medium and 
the boundary conditions are smooth functions, bounded in the following
sense:
\begin{itemize}
\item the admissible medium is bounded, meaning there is a constant $C_1$ so that:
\begin{equation*}
\max\{\|\sigma\|_{L_\infty(\Omega)}\,,\|\sigma^{-1}\|_{L_\infty(\Omega)}\,,\|\nabla\left(\sigma^{-1}\right)\|_{L_\infty(\Omega)}\}<C_1\,;
\end{equation*}
\item and the boundary conditions are smooth and bounded, meaning:
\begin{equation*}
\max\{\|\xi\|_{L_\infty(\partial\Omega)}\,,\|\phi\|_{L_\infty(\Gamma)}\}<C_1\,.
\end{equation*}
\end{itemize}
We also term the set of admissible media:
\begin{equation}\label{eqn:admissible}
\mathcal{A} = \{\sigma\in C^{3}(\Omega): \max\{\|\sigma\|_{L_\infty(\Omega)}\,,\|\sigma^{-1}\|_{L_\infty(\Omega)}\,,\|\nabla\left(\sigma^{-1}\right)\|_{L_\infty(\Omega)}\}<C_1\}\,.
\end{equation}
Here $C^{3}$ is the collection of third-order differentiable function set.
\end{assumption}

With this assumption, we first have the uniform boundedness of the Neumann data over $\mathcal{A}$.

\begin{proposition}[\cite{gilbarg1983}]\label{prop:uniform_bound_Neumann}
Suppose $\rho$ solves~\eqref{eqn:diff_limit} with medium $\sigma$ and the smooth boundary condition $\xi$ satisfying Assumption~\ref{ass:sigma}, then there is a constant $C$ that only depends on $C_1$ and $\Omega$, so that
\begin{equation}
\sup_{\sigma\in\mathcal{A}}\|\frac{1}{\sigma}\partial_n\rho\|_\infty <C\,.
\end{equation}
\end{proposition}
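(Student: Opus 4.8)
The statement is a uniform $L^\infty$-bound on the Neumann data $\frac{1}{\sigma}\partial_n\rho$ over the admissible class $\mathcal{A}$, attributed to classical elliptic theory \cite{gilbarg1983}. The plan is to combine interior and boundary Schauder estimates for the divergence-form equation $-\nabla\cdot(\frac{1}{\sigma}\nabla\rho)=0$ with the uniform control on the coefficient $\frac{1}{\sigma}$ and its gradient provided by Assumption~\ref{ass:sigma}, together with the uniform bound on the Dirichlet data $\xi$.

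First I would rewrite the PDE in nondivergence form: expanding the divergence, $\frac{1}{\sigma}\Delta\rho + \nabla(\frac1\sigma)\cdot\nabla\rho = 0$, i.e. $\Delta\rho = -\sigma\,\nabla(\frac1\sigma)\cdot\nabla\rho$, which is a uniformly elliptic equation whose coefficients are controlled: the principal coefficient $\frac1\sigma$ is bounded above and below away from zero by $C_1$, and the first-order coefficient $\sigma\nabla(\frac1\sigma)$ is bounded in $L^\infty$ by $C_1^2$. Second, by the maximum principle for this equation, $\|\rho\|_{L^\infty(\Omega)} \le \|\xi\|_{L^\infty(\partial\Omega)} < C_1$, giving a uniform $C^0$-bound on the solution. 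Third, because $\sigma \in C^3(\Omega)$ with the stated bounds, the coefficient $\frac1\sigma$ lies in $C^{0,\alpha}(\bar\Omega)$ with a norm bounded uniformly over $\mathcal{A}$ (indeed in $C^{2,\alpha}$, but $C^{0,\alpha}$ suffices here), and since $\partial\Omega$ is $C^1$ — one should note that global Schauder estimates up to the boundary really want $C^{1,\alpha}$ or $C^{2,\alpha}$ boundary regularity, so I would either invoke the smoothness of $\xi$ and a slight strengthening of the boundary hypothesis, or localize and flatten the boundary — I would apply the global Schauder estimate (Theorem 6.6 / 6.30 in \cite{gilbarg1983}) to conclude
\begin{equation*}
\|\rho\|_{C^{1,\alpha}(\bar\Omega)} \le C\bigl(\|\rho\|_{L^\infty(\Omega)} + \|\xi\|_{C^{1,\alpha}(\partial\Omega)}\bigr) \le C(C_1,\Omega),
\end{equation*}
where $C$ depends only on the ellipticity constants, the coefficient norms (hence on $C_1$), and $\Omega$. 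Fourth, the normal derivative $\partial_n\rho$ on $\partial\Omega$ is controlled by $\|\nabla\rho\|_{L^\infty(\bar\Omega)} \le \|\rho\|_{C^{1,\alpha}(\bar\Omega)}$, and multiplying by $\frac1\sigma$ costs only the factor $\|\sigma^{-1}\|_{L^\infty(\Omega)} < C_1$; taking the supremum over $\sigma\in\mathcal{A}$ preserves the bound since $C$ is independent of the particular $\sigma$.

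The main obstacle is ensuring the Schauder (or $W^{2,p}$) estimate genuinely holds \emph{up to the boundary} with the constant depending only on $C_1$ and $\Omega$: the stated hypotheses give $\partial\Omega\in C^1$ and $\xi$ merely ``smooth,'' whereas the cleanest global gradient estimate needs $C^{1,\alpha}$ boundary and Hölder Dirichlet data. I would handle this by reading ``smooth'' in Assumption~\ref{ass:sigma} as giving the needed Hölder norms of $\xi$ uniformly bounded by $C_1$, and by either (i) strengthening the boundary to $C^{1,\alpha}$ — harmless in this applied context — or (ii) using the De Giorgi--Nash--Moser boundary gradient estimate, which for divergence-form equations with bounded measurable principal part and the barrier construction near a $C^1$ boundary still yields a uniform Lipschitz bound on $\rho$ up to $\partial\Omega$. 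Either route delivers the claimed uniform bound; the rest is bookkeeping of how the constants depend on $C_1$ and $\Omega$.
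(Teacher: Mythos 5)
The paper offers no proof of this proposition: it is stated as a citation to Gilbarg--Trudinger, and your job here is exactly to supply the classical elliptic-regularity argument that the citation points to. Your overall plan --- maximum principle for the $L^\infty$ bound on $\rho$, a global $C^{1,\alpha}$ estimate up to the boundary with constants controlled by the ellipticity and coefficient bounds in Assumption~\ref{ass:sigma}, then multiplication by $\sigma^{-1}$ --- is the right one, and you are also right to flag that the hypotheses as literally written ($\partial\Omega\in C^1$ only, and only $\|\xi\|_{L^\infty}<C_1$ rather than a H\"older or $C^{1,\alpha}$ bound on $\xi$) are formally weaker than what any up-to-the-boundary gradient estimate requires; a sup-norm bound on the Dirichlet data alone cannot control $\partial_n\rho$, so one must indeed read ``smooth'' as supplying uniform $C^{1,\alpha}$ control of $\xi$ and take $\partial\Omega$ at least $C^{1,\alpha}$.

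One step needs repair. After rewriting the equation in nondivergence form you invoke Theorem 6.6 of \cite{gilbarg1983}; that theorem requires the lower-order coefficient $\sigma\nabla(\sigma^{-1})$ to be H\"older continuous with a controlled norm, whereas Assumption~\ref{ass:sigma} only bounds it in $L^\infty$ (the $C^3$ membership in $\mathcal{A}$ carries no uniform norm bound over the class, so your parenthetical ``indeed in $C^{2,\alpha}$'' is not available uniformly). The clean route is to stay in divergence form and use the $C^{1,\alpha}$ Schauder theory for divergence-structure equations (Theorems 8.33--8.34 of \cite{gilbarg1983}), which needs only the principal coefficient $\sigma^{-1}\in C^{0,\alpha}(\bar\Omega)$ uniformly --- and this does follow from the uniform Lipschitz bound $\|\nabla(\sigma^{-1})\|_{L^\infty}<C_1$. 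With that substitution (or with the De Giorgi--Nash--Moser boundary gradient estimate you mention as an alternative), the constant depends only on $C_1$ and $\Omega$ and the proof closes as you describe.
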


Note that we assume only that the medium is smooth enough so the Neumann data is bounded. The regularity of the medium could certainly be relaxed but we do not pursue that direction in this paper. The key point here is to have the uniform bound over the set $\mathcal{A}$.

\begin{thm}\label{thm:convergence} Suppose $f(x,v)$ satisfies equation~\eqref{rteknud} with smooth boundary conditions and $\rho(x)$ 
solves \eqref{eqn:diff_limit}. Then, as $\Kn\to 0$, $f(x,v) \to \rho(x),$
assuming suitable compatibility relationships between the boundary data 
$\phi$ and $\xi$ of the two equations.
In particular, with compatible boundary conditions at different orders, one approximates $f$ through different forms:
\begin{itemize}
\item if $\phi(x,v) = \xi(x)$:
\[
\|f-\rho\|_{L_\infty(\Omega\times\mathbb{S}^{d-1})} < C_\mathcal{A}\Kn\,;
\]
\item if $\phi(x,v) = \xi(x) -\Kn \frac{1}{\sigma(x)}v(x)\cdot\nabla\xi(x)$:
\[
\|f-\rho +\frac{\Kn}{\sigma}v\cdot \nabla\rho\|_{L_\infty(\Omega\times\mathbb{S}^{d-1})} < C_\mathcal{A}\Kn^2\,.
\]
%\item if $\phi(x,v) = \xi(x)-\Kn  \frac{1}{\sigma}v\cdot\nabla\xi + \Kn^2 \mathcal{L}^{-1}\left[(v\cdot\nabla)\frac{1}{\sigma}(v\cdot\nabla)\xi\right] $.
%\[
%\|f-\rho +\frac{\Kn}{\sigma}v\cdot \nabla\rho - \Kn^2\mathcal{L}^{-1}\left[(v\cdot\nabla)\frac{1}{\sigma}(v\cdot\nabla)\rho\right] \|_{L_\infty(\Omega)} = C\Kn^3\,.
%\]
\end{itemize}
Here the constant $C_\mathcal{A}$ depends on $C_1$, the upper bound in Assumption~\ref{ass:sigma} for the admissible set.
%, the constant bound on the media and boundary condition, as well as the upper bound of $\|\xi\|_{L_\infty(\Omega)}$, $\|\frac{1}{\sigma}\|_{L_\infty(\Omega)}$ and $\|\nabla\left(\frac{1}{\sigma}\right)\|_{L_\infty(\Omega)}$.
\end{thm}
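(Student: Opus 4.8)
The plan is to construct an asymptotic expansion of $f$ in powers of the Knudsen number $\Kn$, substitute it into the RTE \eqref{rteknud}, match terms order by order, and then bound the residual by a maximum-principle (or $L^\infty$-stability) argument for the transport operator. Write $f = f_0 + \Kn f_1 + \Kn^2 f_2 + \cdots$. Plugging into $v\cdot\nabla f = \tfrac1\Kn \sigma\mathcal L f$ and collecting powers of $\Kn$: at order $\Kn^{-1}$ one gets $\sigma\mathcal L f_0 = 0$, so $f_0 = f_0(x)$ is independent of $v$ since $\mathcal L$ annihilates exactly the $v$-independent functions; at order $\Kn^0$ one gets $v\cdot\nabla f_0 = \sigma\mathcal L f_1$, and since $\braket{v\cdot\nabla f_0}_v = 0$ this is solvable, giving $f_1 = -\tfrac1\sigma v\cdot\nabla f_0 + \bar f_1(x)$ (using $\mathcal L^{-1}$ on the range, noting $\braket{v}_v=0$ and $\braket{v\otimes v}_v = \tfrac1d I$); at order $\Kn^1$ the solvability condition $\braket{v\cdot\nabla f_1}_v = 0$ forces $-\nabla\cdot(\tfrac1\sigma \braket{v\otimes v}_v\nabla f_0) = 0$, i.e. (up to the harmless constant $1/d$) $-\nabla\cdot(\tfrac1\sigma\nabla f_0) = 0$, which is exactly the diffusion equation \eqref{eqn:diff_limit}. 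Hence $f_0 = \rho$.

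Next I would handle the boundary conditions, which is where the two cases diverge. The interior expansion $f_0 + \Kn f_1$ need not satisfy $f|_{\Gamma_-} = \phi$, so in general one must add a boundary layer corrector, typically of Knudsen-layer type depending on a stretched normal variable, solving a half-space (Milne-type) problem near $\partial\Omega$. In the first case $\phi = \xi$, the leading term already matches on the boundary ($f_0|_{\partial\Omega} = \rho|_{\partial\Omega} = \xi$), and the $O(\Kn)$ mismatch from $\Kn f_1$ is absorbed into the $O(\Kn)$ error bound, so one only needs $f = \rho + O(\Kn)$; no delicate layer analysis is needed at this order. In the second case $\phi = \xi - \Kn\tfrac1\sigma v\cdot\nabla\xi$, the boundary data is chosen precisely so that $\rho - \tfrac{\Kn}{\sigma}v\cdot\nabla\rho$ matches $\phi$ on $\Gamma_-$ to $O(\Kn^2)$ (since $\rho|_{\partial\Omega}=\xi$ implies $\nabla\rho$ and $\nabla\xi$ agree up to tangential/normal decomposition on the boundary), which is why one gets the sharper rate there — the first-order corrector $-\tfrac1\sigma v\cdot\nabla\rho$ is being "pre-loaded" into the boundary condition, eliminating the need for a leading-order Knudsen layer.

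Finally, to convert the formal matching into a rigorous bound I would define the remainder $r_\Kn := f - (f_0 + \Kn f_1 + \cdots)$ (truncated appropriately for each case), derive the transport equation it satisfies — of the form $v\cdot\nabla r_\Kn = \tfrac1\Kn\sigma\mathcal L r_\Kn + \Kn^k(\text{smooth source})$ with small inflow data — and invoke an $L^\infty$ a priori estimate for the RTE: the operator $v\cdot\nabla - \tfrac1\Kn\sigma\mathcal L$ with inflow data obeys $\|r_\Kn\|_{L^\infty(\Omega\times\mathbb S^{d-1})} \le C(\|\text{inflow}\|_{L^\infty(\Gamma_-)} + \|\text{source}\|_{L^\infty})$ with $C$ uniform in $\Kn$ (this uniformity is the crux — it follows from the characteristics/integrating-factor representation together with $\|\mathcal L\|\le 2$ and positivity, or from the probabilistic interpretation). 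Because the derivatives appearing in the correctors are controlled by $\nabla\rho,\nabla^2\rho,\ldots$, which by elliptic regularity (Proposition \ref{prop:uniform_bound_Neumann} and its higher-order analogues) are bounded uniformly over the admissible set $\mathcal A$, the constant comes out as $C_\mathcal A$ depending only on $C_1$ and $\Omega$. I expect the main obstacle to be precisely this uniform-in-$\Kn$ $L^\infty$ stability estimate for the transport equation together with a rigorous treatment of the boundary layer in case one (if one wants the sharp constant); this is the technical heart deferred to the appendix, whereas the body of the paper gives only the formal matched-asymptotics derivation sketched above.
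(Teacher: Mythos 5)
Your proposal is correct and follows essentially the same route as the paper's appendix proof: a Hilbert expansion $f=f_0+\Kn f_1+\Kn^2 f_2+f_r$ with the solvability conditions yielding the diffusion equation, a remainder equation of the form $v\cdot\nabla f_r = \tfrac{\sigma}{\Kn}\mathcal L f_r - \Kn^2 v\cdot\nabla f_2$ with $O(\Kn^2)$ inflow data, and an $L^\infty$ (maximum-principle) stability estimate for the transport operator, with all constants controlled uniformly over $\mathcal A$ via elliptic regularity of $\rho$. Your extra remarks about Knudsen/Milne boundary layers are not needed here, as you correctly note, since the compatible boundary data eliminates the layer at the relevant order — which is exactly the stance the paper takes.
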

We leave the rigorous proof to the appendix, and present here the formal 
perturbation expansion derivation; the latter is useful in building intuition.
\begin{proof}[Sketch Proof: Perturbation Expansion]
We will use the standard asymptotic expansion technique in $\Kn$ away from the boundary. Set
\begin{equation*}
f_\text{in} = f_0 + \text{$\Kn$} f_1 + \text{$\Kn$}^2 f_2 + \cdots\,.
\end{equation*}
Plugging the expansion into \eqref{rteknud}, we obtain
\begin{equation*}
v \cdot \nabla f_0 + \text{$\Kn$} v \cdot \nabla f_1 + \text{$\Kn$}^2 v \cdot \nabla f_2 + \cdots  ~=~ \frac{1}{\text{$\Kn$}} \sigma \mathcal L [f_0 + \text{$\Kn$} f_1 + \text{$\Kn$}^2 f_2 + \cdots ].
\end{equation*}
Multiplying by $\Kn$ and equating in powers of $\Kn$ gives
\begin{align*}
\text{$\Kn$}^0:& \quad \mathcal L [f_0] = 0 \,,\\
\text{$\Kn$}^1:& \quad v \cdot \nabla f_0 =\sigma \mathcal L[f_1] \,,\\
\text{$\Kn$}^2:& \quad v \cdot \nabla f_1 = \sigma \mathcal L [f_2] \,.
\end{align*}
The zeroth order expansion indicates that $f_0$ is in the null space of $\mathcal{L}$. From equation~\eqref{eqn: bam} we deduce that $f_0$ must be velocity 
independent, and thus we write $f_0(x,v)=\rho(x)$. 
With this expression, considering the equation at $\mathcal{O}(\Kn)$, we have
\begin{equation*}
f_1 = \mathcal L^{-1} \left[ \frac{1}{\sigma} v \cdot \nabla \rho \right]\quad\Rightarrow \quad f_1 = - \frac{1}{\sigma} v \cdot \nabla \rho\,.
\end{equation*}
Here we have used the fact that $\mathcal L$ is one-to-one on the domain 
($\NullL)^\perp$ and that $v \cdot \nabla \rho\perp(\NullL)^\perp$, since $v$
integrates to zero on the unit sphere. To close the system we consider 
the equation at $\mathcal{O}(\Kn^2)$, substituting $f_0=\rho$ and $f_1 = - \frac{1}{\sigma} v \cdot \nabla \rho$ to obtain
\begin{equation*}
-v \cdot \nabla \left(\frac{1}{\sigma} v \cdot \nabla \rho\right) = \sigma \mathcal L [f_2].
\end{equation*}
Integrating the equation on both sides with respect to $v$ and taking into account the fact that $\int_{\mathbb S^{d-1}} \mathcal L[f_2] \rd v=0$, we have, using the summation
convention,
\begin{align*}
0 &  = - \int_{\mathbb S^{d-1}} v \cdot \nabla \left( \frac{1}{\sigma} v \cdot \nabla \rho \right) \rd v = - \int_{\mathbb S^{d-1}}  v_i v_j \partial_i \left( \frac{1}{\sigma} \partial_j \rho\right) \rd v  = -  C_d \partial_i \left( \frac{1}{\sigma}\partial_i \rho\right) \\
& = - C_d \nabla \cdot\left( \frac{1}{\sigma} \nabla \rho \right).
\end{align*}
implying that
\begin{equation*}
- \nabla \cdot\left( \frac{1}{\sigma} \nabla \rho \right)=0\,.
\end{equation*}
Here we have used that 
\begin{equation}
\label{eq:needthis}
\int_{\mathbb S^{d-1}} v_iv_j\rd{v}=C_d\delta_{ij}, \quad C_d:=\int_{\mathbb S^{d-1}} v_i^2\rd{v}.
\end{equation}
Note that $C_d$ depends on dimension.
For example, in $\mathbb S^2$, $C_d=1/3$. Thus, we have shown that the radiative transfer equation in the diffusion limit becomes the diffusion equation, which concludes the sketch proof of the theorem.
\end{proof}

\begin{remark}
In the preceding formal derivation we have ignored boundary conditions.
In practice, unless these are chosen carefully, there will be a mismatch 
between the DE and the small $\Kn$ solution of the RTE near the boundary.
The boundary conditions stated in the theorem give different levels
of consistency between the two equations, and hence lead to differing
error estimates. See the proof in the appendix for details.
When the boundary conditions are incompatible the analysis is considerably
more subtle -- see \cite{diff_limit,layer_correction,layer_correction_l2} 
for details.
\end{remark}
%
%In the upcoming sections, we do assume the boundary condition and the media is bounded, meaning:
%\begin{assumption}\label{ass:sigma}
%The media and the boundary conditions are all bounded, meaning there is a constant $C_1$ so that:
%\begin{equation*}
%\max\{\|\sigma\|_{L_\infty(\Omega)}\,,\|\frac{1}{\sigma}\|_{L_\infty(\Omega)}\,,\|\nabla\left(\sigma^{-1}\right)\|_{L_\infty(\Omega)}\,,\|\xi\|_{L_\infty(\partial\Omega)}\,,\|\phi\|_{L_\infty(\Gamma)}\}<C_1\,.
%\end{equation*}
%\end{assumption}

\subsection{Inverse Problems for the RTE and DE} \label{invintro}
We now define Bayesian inverse problems for the RTE and DE,
relating to partial and noisy observations of the albedo and DtN
operators respectively. The first ingredient is definition of
the forward map $\mathcal{G}$, which we now do for the RTE and DE
equations. We conclude this subsection with a discussion of the
prior distribution, which we choose in common between the RTE and DE
settings.

In optical tomography, high energy light with a known intensity is injected 
into the material, and detectors are placed on the tissue boundary to 
collect the light current emitted. For the RTE, the albedo operator is
defined by $\mathcal H^\RTE$ which is a $\sigma-$dependent linear
transformation of boundary data $\phi$ into the measurement space,
defined by
\begin{equation}\label{eqn:h_RTE}
\mathcal H^\RTE(\sigma)\phi=h^\RTE\,.
\end{equation}
where
\begin{equation}\label{eqn:calH_RTE}
h^\RTE(x) = -\frac{1}{C_d \eps} \int_{\Gamma(x)} v \cdot n f(x,v)\rd v\,,
\end{equation}
and $f$ satisfies~\eqref{rteknud}. It is important to note that, whilst
$\mathcal H^\RTE$ is linear in its action on $\phi$, it depends nonlinearly
on the unknown medium $\sigma.$ The inverse problem of reconstructing
$\sigma$ from measurements of $h^\RTE$ is thus a nonlinear inverse problem.
 
In practice, finitely many smooth incoming data $\phi_k$ are injected and 
finitely many measurements are made at the boundary for each $\phi_k$;
we assume that these measurements may be expressed as linear functionals
$l_j$ of $h^\RTE$. We thus define the forward map to be inverted by
\begin{equation}
\label{eq:GRTE}
\mathcal{G}^\RTE(\sigma)_{j,k}=l_j(\mathcal H^\RTE(\sigma)\phi_k),
\end{equation}
where $(j,k)\in\{1,\cdots,J\}\otimes\{1,\cdots,K\}.$ 
We assume additive Gaussian noise $\eta$ to obtain the compact
representation of the inverse problem
\begin{equation}\label{eqn:calG_RTE}
\mathfrak y = \mathcal{G}^\RTE(\sigma) + \eta\,,
\end{equation}
where $\eta \in \mathbb{R}^{JK}$ is drawn from a Gaussian distribution
which we assume to have the form 
\begin{equation}\label{eqn:eta_pollution}
\eta\sim \mathcal{N}(0,\gamma^2\mathbb{I})\,,
\end{equation}
meaning
\begin{equation}\label{eqn:rte_likelihood}
\mathfrak y\,\big|\,\sigma\sim \mathcal{N}(\mathcal{G}^\RTE(\sigma),\gamma^2\mathbb{I})\,.
\end{equation}

For the DE model the situation is analogous.
The map that takes the Dirichlet data to the Neumann outflow is termed the DtN map and is defined by
\begin{equation}\label{eqn:h_DE}
\mathcal H^\DE(\sigma)\phi=h^\DE\,,
\end{equation}
where
\begin{equation}\label{eqn:calH_DE}
h^\DE(x) = \frac{1}{\sigma} \frac{\partial \rho}{\partial n}(x)\,
\end{equation}
and $\rho$ satisfies~\eqref{eqn:diff_limit}. 
In practice, finitely many incoming data $\xi_k$ are injected and finitely 
many linear functionals $l_j$ of $h^\DE$ are measured, noisily, leading
to an inverse problem of the form
\begin{equation}\label{eqn:calG_DE}
\mathfrak y = \mathcal{G}^\DE(\sigma) + \eta\,,
\end{equation}
where $\eta \in \mathbb{R}^{JK}$ denotes observational noise and
where the forward map is defined by  
\begin{equation}
\label{eq:GDE}
\mathcal{G}^\DE(\sigma)_{j,k}=l_j(\mathcal H^\DE(\sigma)\phi_k),
\end{equation}
where $(j,k)\in\{1,\cdots,J\}\otimes\{1,\cdots,K\}.$
For simplicity we assume the same noise model \eqref{eqn:eta_pollution}
for $\eta.$

Together with \eqref{eqn:eta_pollution} and the assumption that $\eta$
and $\sigma$ are {\em a priori} independent,  \eqref{eqn:calG_RTE} 
and \eqref{eqn:calG_DE} define the likelihood for a Bayesian formulation
of the inverse problem of determining $\sigma$ from $\mathfrak y$ from RTE
and DE respectively.
We now define the prior on $\sigma$, which we will choose in common
between the two inverse problems. To this end recall the set
\eqref{eqn:admissible} and define prior distribution $\mu_0(d\sigma)$
to be a probability measure supported on $\mathcal{A}:$

\begin{assumption}
\label{a:add}
The prior measure $\mu_0$ is supported on an infinite dimensional separable
Banach space, and the support is contained in the admissible set $\mathcal{A}$
given by \eqref{eqn:admissible}:
\begin{equation*}
\int_\mathcal{A}1\rd\mu_0(\sigma) = 1\,.
\end{equation*}
\end{assumption}

\noindent In our case $\mathcal{A}$ is a subset of $C^3$. If we further relax the regularity assumptions, to let $\mathcal{A}$ be a subset of $W^{1,\infty}$, for example, then $W^{1,\infty}$ is not separable. But it is possible to construct
useful measures with support in $W^{1,\infty}$ which are separable, for example
through the closure of sets of random series expansions; see
\cite{hellinger} for a related  example in $L^{\infty}.$

Bayes' theorem~\eqref{bayestheorem} for both models is then given by
\begin{equation}
\label{eq:bayes1}
\mu^{\mathfrak y}_\RTE(d\sigma) = \frac{1}{Z^\RTE}\mu^\sigma_\RTE(\mathfrak y)\mu_0(d\sigma)\,, \quad\text{and}\quad \mu^{\mathfrak y}_\DE(\sigma) = \frac{1}{Z^\DE}
\mu^\sigma_\DE(\mathfrak y)\mu_0(d\sigma)\,,
\end{equation}
where
\begin{equation}
\label{eq:bayes2}
\mu^\sigma_\RTE(\mathfrak y) = \exp\left(-\frac{1}{2\gamma^{2}}\|\mathfrak y - \mathcal{G}^\RTE(\sigma)\|^2_2\right)\,,\quad\text{and}\quad \mu^\sigma_\DE(\mathfrak y) = \exp\left(-\frac{1}{2\gamma^{2}}\|\mathfrak y - \mathcal{G}^\DE(\sigma)\|^2_2\right)\,.
\end{equation}
The functions $\mathcal{G}^\RTE$ and $\mathcal{G}^\DE$ are here both viewed
as mappings from $\mathcal{A}$ into $\mathbb{R}^{JK}.$
The normalization factors are given by 
\begin{equation}
\label{eq:bayes3}
Z^\RTE = \int_{\mathcal{A}} \mu^\sigma_\RTE(\mathfrak y)\rd\mu_0(\sigma)\,,\quad\text{and}\quad Z^\DE = \int_{\mathcal{A}} \mu^\sigma_\DE(\mathfrak y)\rd\mu_0(\sigma)\,.
\end{equation}
Note also that the likelihoods $\mu^\sigma_\RTE(\mathfrak y)$ and $\mu^\sigma_\DE(\mathfrak y)$ are, for fixed $\sigma$, proportional to
densities on $\mathbb{R}^{JK}$; hence we write them as functions of $y$.
On the other hand $\mu^{\mathfrak y}_\RTE(d\sigma), \mu^{\mathfrak y}_\DE(d\sigma)$
and $\mu_0(d\sigma)$ are measures with support in $\mathcal{A}$,
a subset of an infinite dimensional separable Banach space.

Theorem \ref{thm:convergence} shows that given compatible $\phi$ and $\xi$, $h^\RTE$ and $h^\DE$ are close when the Knudsen number $\Kn$ is small, 
so that $\mathcal{G}^\RTE$ and $\mathcal{G}^\DE$ are close for every fixed 
$\sigma$, when $\Kn$ is small. 
In section \ref{sec:N} we use these facts to demonstrate the 
convergence of  $\mu^\sigma_{\RTE}$ to $\mu^\sigma_{\DE}$ as $\Kn \to 0.$

\subsection{Linearized Albedo Operator And DtN Map} \label{lin}
We derive linearized versions of the albedo operator and the DtN map
by assuming that the unknown medium $\sigma$ is close to a known background 
$\sigma_0$. In order to  enforce positivity, we assume $\sigma = e^{u}$, 
define $u_0$ so that $\sigma_0 = e^{u_0}$, and find equations satisfied 
by the perturbation $w = u-u_0$. 
The corresponding inverse problem amounts to 
reconstructing $w$ using the measurements and some known information 
computed using the background medium $\sigma_0$. We express
the admissible set $\mathcal{A}$ given in \eqref{eqn:admissible} 
on the log-scale and write
\begin{equation}\label{eqn:admissible_u}
\mathcal{A}_u = \{u\in C^{3}(\Omega): \max\{\|e^u\|_{L_\infty(\Omega)}\,,\|{e^{-u}}\|_{L_\infty(\Omega)}\,,\|\nabla\left(e^{-u}\right)\|_{L_\infty(\Omega)}\}<C_1\}\,.
\end{equation}
This implies
\begin{equation}\label{eqn:bound_u}
\sup_{u \in \mathcal{A}_u} \|u\|_{L_\infty(\Omega)}<C_2 := \log{C_1}\,.
\end{equation}

To start, we recall equation~\eqref{rteknud},
\begin{equation}\label{eqn:rte_eu}
\begin{cases}
v \cdot \nabla f = \tfrac{1}{\eps} e^u \mathcal L f \\
f \big|_{\Gamma_-} = \phi(x,v)
\end{cases}\,.
\end{equation}
We assume that there is a background scattering coefficient $u_0\in\mathcal{A}_u$, and that $w(x)\in C^3(\Omega)$ is a small fluctuation of $u$ around
the background $u_0$: $w=u-u_0$. Then  
\begin{equation}\label{eqn:assumption_sigma}
\|w(x) \|_{L_\infty(\Omega)}= \|u(x) - u_0(x)\|_{L_\infty(\Omega)} \ll \| u\|_{L_\infty(\Omega)}\,.
\end{equation}
We define a new function $f_\lin$ which solves the RTE with the background scattering coefficient and the same boundary condition,
\begin{equation}
\begin{cases}
v \cdot \nabla f_\lin = \frac{1}{\Kn} e^{u_0}\mathcal L f_\lin \\
f_\lin \big|_{\Gamma_-} = \phi(x,v) \label{rtelin}
\end{cases}\,.
\end{equation}
The difference between $f$ and $f_\lin$, termed $\mathfrak f= f - f_\lin$, then 
satisfies, neglecting terms of $\mathcal{O}(w^2)$, the following 
error equation:
\begin{equation}
\begin{cases}
v \cdot\nabla \mathfrak f =  \frac{1}{\Kn} e^{u_0} \mathcal L \mathfrak f + \frac{1}{\Kn}e^{u_0}w \mathcal L f_\lin\\
\mathfrak f \big|_{\Gamma_-} = 0\label{rtefluc}
\end{cases}\,.
\end{equation}

To extract boundary data from~\eqref{rtefluc}, we define the adjoint equation, with a delta function on the boundary,
\begin{equation}
\begin{cases}
-v \cdot \nabla g = \frac{1}{\Kn} e^{u_0} \mathcal L g \\
g \big|_{\Gamma_+} = \delta_y(x)\label{rteadj} 
\end{cases}\,.
\end{equation}
Here we have used the fact that $\mathcal L$ is self-adjoint, and for the adjoint equation, the incoming boundary condition should be imposed on $\Gamma_+$. We have also imposed a delta function concentrated at $y\in\partial\Omega$. Multiplying \eqref{rtefluc} by $g$ and \eqref{rteadj} by $\mathfrak f$ and subtracting the two obtained equations, integrated over $x$ and $v$, we obtain, upon using Green's identity
\begin{equation*}
\int_{\Gamma_+(y) \cup \Gamma_-(y)} \left( v \cdot n\right) \mathfrak f g \rd x \rd v  = \frac{1}{\Kn} \int_{\Omega}e^{u_0} w\int_{\mathbb{S}^{d-1}} g \mathcal L f_\lin \rd{v}\rd x\,. %\label{integratedeqn}
\end{equation*}
Noting that $\mathfrak f|_{\Gamma_-}=0$ and $g|_{\Gamma_+}=\delta_y$, we may further simplify the left hand side, obtaining
\begin{equation}
\int_{\Gamma_+(y)} v \cdot n_y \mathfrak f(y,v) \rd v = \frac{1}{\Kn} \int_\Omega e^{u_0(x)} w(x)\int_{\mathbb{S}^{d-1}} g (x,v) \mathcal L f_\lin(x,v) \rd v \rd x. \label{introtogamma}
\end{equation}

%If one denotes
%\begin{align*}
%\gamma^\RTE(x;y,\phi) = \frac{e^{u_0}}{C_d\Kn} \int g(x,v) \mathcal L f_\lin(x,v)\rd v\,,
%\end{align*}
%with $g$ and $f_\lin$ being the solutions to~\eqref{rteadj} and~\eqref{rtelin} respectively, we then have the first type Fredholm equation:
%\begin{align}\label{eqn: gammaRTEb}
%\int \gamma^\RTE(x;y,\phi) w(x) \rd x = \int_{\Gamma_+(y)} v \cdot n_y f(y,v) \rd v- \int_{\Gamma_+(y)} v \cdot n_y f_\lin(y,v) \rd v\,.
%\end{align}

As in the nonlinear case, we have finitely many measurements and experiments conducted. In the $K$ experiments, we use incoming data $\phi_k$, and for each experiment we measure data using the measurement-operator $l_j$:
\begin{equation}\label{eqn:experiments_measurements}
\{\phi_1\,,\cdots,\phi_K\}\,,\quad\{l_1\,,\cdots,l_J\}\,.
\end{equation}
Letting $g_j$ denote the solution to~\eqref{rteadj} with $\delta_{x_j}(x)$ 
on the boundary, and $f_k$, and $f_{\lin,k}$ denote the solutions 
to~\eqref{eqn:rte_eu} and~\eqref{rtelin} with $\phi_k$ as boundary data, 
we define
\begin{equation}\label{eqn:gammaRTE}
\gamma^\RTE_{jk}(x) =- \frac{e^{u_0}}{C_d\Kn^2} \int_{\mathbb S^{d-1}} g_j(x,v) \mathcal L f_{\lin,k}(x,v)\rd v\,,
\end{equation}
and define 
\begin{equation}\label{eqn:fredhom_rte}
G_{jk}^\RTE (w):=\langle\gamma^\RTE_{jk}(x) \,,w\rangle_x\,, 
\end{equation}
with $\langle \cdot, \cdot \rangle_x$ denoting the inner-product defined
by integration over $\Omega$ in the $x$
variable alone.
We note that, by ~\eqref{introtogamma},
\begin{equation}\label{eqn:fredhom_rte2}
G_{jk}^\RTE (w) \approx \mathcal{G}_{jk} ^{\RTE}(\log(u)) - \mathcal{G}_{jk}^{\RTE}(\log(u_0))
\end{equation}
because
\begin{align*}
\mathcal{G}_{jk} ^{\RTE}(\log(u)) - \mathcal{G}_{jk}^{\RTE}(\log(u_0)) & = l_j(\mathcal{H}^\RTE(\log(u))(\phi_k))- l_j(\mathcal{H}^\RTE(\log(u_0))(\phi_k))\\
& =-\frac{1}{C_d\Kn} \int_{\Gamma_+(x_j)} v \cdot n_{x_j} f_k(x_j,v) \rd v+\frac{1}{C_d\Kn}\int_{\Gamma_+(x_j)} v \cdot n_{x_j} f_{\lin,k}(x_j,v) \rd v\\
&\approx -\frac{1}{C_d\Kn} \int_{\Gamma_+(x_j)} v \cdot n_{x_j} {\mathfrak f}_k(x_j,v) \rd v\,\\
&=G_{jk}^\RTE (w).
\end{align*}
Here $\mathfrak f_k= f_k - f_{\lin,k}.$

%We further define
%\begin{align}\label{gammaRTE_jk}
%\gamma^\RTE_{jk}(x) = \frac{1}{C_d\Kn} \int g_j(x,v) \mathcal L f_{\lin,k}(x,v)\rd v\,,
%\end{align}
%with $g_j$ being the solution to~\eqref{rteadj} with $\delta_{y_j}(x)$ boundary, and $f_{\lin,k}$ satisfying~\eqref{rtelin} with $\phi_k$ as the boundary condition.
%
%Define
%\begin{align}%\label{gammaRTE}
%\gamma^\RTE(x;y,\phi) = \frac{1}{C\Kn} \int g(x,v) \mathcal L f_\lin(x,v)\rd v\,.
%\end{align}
%It is a function of $x$ with parameters $y$ and $\phi$ that come in through the solutions $g$ and $f_\lin$. We notice that
%\begin{align} 
%\int_{\Gamma_+(y)}v \cdot n_y\mathfrak f(y,v)\rd v =  \int_{\Gamma_+(y)} v \cdot n_y (f-f_\lin)(y,v) \rd v = \mathcal G^\RTE(\zeta)(y;\phi) - \mathcal G^\RTE(\sigma)(y;\phi)\,.
%\end{align}
%%It is a number that depend on $y$ and $\phi$, where $\phi$ dependence comes in through $f$ and $f_\lin$. 
%One should also regard this equation as the definition of the linearized albedo operator:
%\begin{equation}\label{eqn:G_linear_RTE}
%G^\RTE(\zeta):\phi\to  \langle \gamma^\RTE(\cdot;x,\phi)\,,\zeta(\cdot)\rangle\,.
%\end{equation}

Deriving the linearized DtN map for the diffusion equation is similar. For ease of notation we start with the form of the diffusion equation as in~\eqref{eqn:diff_limit}, where the scattering coefficient is shown in the denominator. 
For positivity, we use $\sigma = e^u$, and $\sigma_0 = e^{u_0}$ as before.
We now derive an equation for $\tau=\rho-\rho_{lin}$, first noting that
\begin{equation*}
\begin{cases}
-\nabla \cdot \left( \frac{1}{e^{u}} \nabla \rho\right) =- \nabla \cdot \left( \frac{1}{e^{u_0}e^{w}} \nabla (\rho_\lin+\tau)\right)  = 0  \\
\rho \big|_{\partial \Omega} = \xi(x)
\end{cases}\,,
\end{equation*}
where $\rho_\lin$ solves:
\begin{equation}\label{eqn:diff_lin}
\begin{cases}
-\nabla \cdot \left( \frac{1}{e^{u_0}} \nabla \rho_\lin\right)  = 0  \\
\rho_\lin \big|_{\partial \Omega} = \xi(x)
\end{cases}\,.
\end{equation}
Subtracting the two equations and neglecting higher order terms, we have
\begin{equation}
\begin{cases}
 -\nabla \cdot \left(\frac{1}{e^{u_0}} \nabla \tau\right)= -\nabla \cdot \left( \frac{w}{e^{u_0}} \nabla \rho_\lin\right)\\
\tau \big|_{\partial \Omega} = 0
\end{cases}\, \label{tildediff}
\end{equation}
with $w=u-u_0$ as in the RTE case.
%Here we have used
%\begin{align*}
%\frac{1}{\sigma_0 + \zeta} & = \frac{1}{\sigma_0} - \frac{\zeta}{\sigma_0^2} + \mathcal O(\zeta^2),
%\end{align*}
%and have dropped the second order  $\zeta \tau$ term. Note also that $\tau$ has zero boundary condition.

We also define $\rho_g$ that solves the adjoint equation:
\begin{equation}
\begin{cases}
-\nabla \cdot \left(\frac{1}{e^{u_0}} \nabla \rho_g \right) =0\\
\rho_g \big|_{\partial \Omega} = \delta_y\label{adjdiff}
\end{cases}\,.
\end{equation}
Multiplying~\eqref{tildediff} by $\rho_g$,~\eqref{adjdiff} by $\tau$ and integrating over $\Omega$, we obtain:
\begin{align}\label{eqn:integratedeqn_diff}
\int_\Omega\frac{w}{e^{u_0}}\nabla\rho_\lin\cdot\nabla\rho_g\rd{x} = -\int_{\partial\Omega}\frac{w}{e^{u_0}}\frac{\partial \rho_\lin}{\partial n}\rho_g\rd{x}+ \int_{\partial\Omega}\frac{1}{e^{u_0}}\frac{\partial\tau}{\partial n}\rho_g\rd{x}=-\frac{w(y)}{e^{u_0(y)}}\frac{\partial\rho_\lin(y)}{\partial n_y}+\frac{1}{e^{u_0(y)}}\frac{\partial\tau(y)}{\partial n_y}\,.
\end{align}

%\begin{align*}
%\int_{\partial \Omega} \frac{1}{\sigma_0} \frac{\partial w }{\partial n} \rho_g \rd x - \int_\Omega \frac{1}{\sigma_0} \nabla \tau \cdot \nabla \rho_g \rd x &= \int_{\partial \Omega} \frac{\zeta}{\sigma_0^2} \frac{\partial \rho_\lin}{\partial n} \rho_g \rd x - \int_\Omega \frac{\zeta}{\sigma_0^2} \nabla \rho_\lin \cdot \nabla \rho_g \rd x,.
%\end{align*}
%Similarly, multiplying \eqref{adjdiff} by $\tau$ and integrating over $\Omega$ gives
%\begin{align*}
%\int_{\partial \Omega} \frac{1}{\sigma_0}\frac{\partial \rho_g}{\partial n }\tau \rd x -\int_\Omega \frac{1}{\sigma_0}\nabla \rho_g \cdot \nabla \tau \rd x =0\,.
%\end{align*}
%We subtract the two and use the fact that $\tau$ is zero and $\rho_g=\delta_y$ on the boundary to obtain
%\begin{align}\label{eqn:integratedeqn_diff}
%\frac{1}{\sigma_0} \frac{\partial \tau}{\partial n}(y) - \frac{\zeta}{\sigma^2_0} \frac{\partial \rho_\lin}{\partial n} (y) = - \int_\Omega \frac{\zeta}{\sigma_0^2} \nabla \rho_\lin \cdot \nabla \rho_g \rd x\,.
%\end{align}
Similarly to the nonlinear case, we conduct finitely many experiments and 
make finitely many measurements as in~\eqref{eqn:experiments_measurements}. Defining
\begin{equation}\label{eqn:gammaDE}
\gamma_{jk}^\DE(x)  = \frac{1}{e^{u_0}} \nabla \rho_{\lin,k} \cdot \nabla \rho_{g,j}\,,
\end{equation}
where $\rho_{g,j}$ satisfies~\eqref{adjdiff} with $\delta_{y_j}$ boundary condition and $\rho_{\lin,k}$ satisfies~\eqref{eqn:diff_lin} with $\xi_k$ with as the boundary condition. Using the approximation that $\tau$ satisfies \eqref{tildediff}, we write~\eqref{eqn:integratedeqn_diff} as
\begin{equation}\label{eqn:fredhom_diff}
\langle\gamma^\DE_{jk}(x) \,,w\rangle_x = \mathcal{G}_{jk}^{\DE}(\log(u)) - \mathcal{G}^{\DE}_{jk}\log((u_0)) = G^\DE_{jk}(w)\,,
\end{equation}
where again $G^\DE_{jk}$ is the linearized approximation and we have used the estimate
\begin{align*}
\mathcal G_{jk}^\DE(\log(u))- \mathcal G^\DE_{jk}(\log(u_0)) & = \frac{1}{e^u} \frac{\partial \rho_k}{\partial n}(x_j) -\frac{1}{e^{u_0}}\frac{\partial\rho_{\lin,k}}{\partial n}(x_j)\\
&= \frac{1}{e^{w} e^{u_0}} \frac{\partial(\tau_k + \rho_{\lin,k})}{\partial n}(x
_j)  -\frac{1}{e^{u_0}}\frac{\partial\rho_{\lin,k}}{\partial n}(x_j)\\
& \approx \frac{1}{e^{u_0} }\frac{\partial \tau_k }{\partial n }(x_j) - \frac{w}{e^{u_0} } \frac{\partial \rho_{\lin,k}}{\partial n}(x_j)\,.
%& = \mathcal G^\DE(\sigma_0)(y;\phi) + \frac{1}{\sigma_0 }\frac{\partial \tau}{\partial n } - \frac{\zeta}{\sigma_0^2 } \frac{\partial \rho_\lin}{\partial n} \\
%\mathcal G^\DE(\sigma)(y;\phi) - \mathcal G^\DE(\sigma_0)(y;\phi) & = \frac{1}{\sigma_0 }\frac{\partial \tau }{\partial n } - \frac{\zeta}{\sigma_0^2 } \frac{\partial \rho_\lin}{\partial n}.
\end{align*}
Here $\tau_k=\rho_k-\rho_{lin,k}.$ This defines the (linear) action of $G^{DE}_{jk}$ on $w$.
%Again using \eqref{eqn:linearize}, at leading order, we once again obtain a first-type Fredholm integral with
%\begin{equation}
%\int \gamma^\DE(x;y,\phi)\zeta(x)\rd{x} = G^\DE(\zeta)(y;\phi)\,.\label{gammadiffeq}
%\end{equation}
%Since this holds true for all $y$, we now have the definition of the linearized DtN map:
%\begin{equation}\label{eqn:G_linear_DE}
%G^\DE(\zeta): \phi\to  \langle \gamma^\DE(\cdot;x,\phi)\,,\zeta\rangle\,.
%\end{equation}
%
Notice that the linearized albedo operator~\eqref{eqn:fredhom_rte} and the linearized DtN map~\eqref{eqn:fredhom_diff} have the same format: they are both Fredholm first type integrals, determined by the $\gamma_{jk}^\RTE$ and $\gamma_{jk}^\DE$ respectively defined in~\eqref{eqn:gammaRTE} and~\eqref{eqn:gammaDE}. To show the convergence of the two problems amounts to showing, in the small $\Kn$ regime, the convergence of the two forward maps $\gamma_{jk}^\RTE\sim\gamma_{jk}^\DE$ for all $j$ and $k$, and the convergence of the data $G^\RTE_{jk}(w)$ to $G^\DE_{jk}(w)$ for reasonably small $w$.

\section{Nonlinear Inverse Problems}
\label{sec:N}
In this section we analyze the distance between the posterior distributions
of the nonlinear inverse problems defined by the RTE and DE, namely
$\mu^{\mathfrak y}_\RTE(\sigma)$ and $\mu^{\mathfrak y}_\DE(\sigma)$ respectively.
We consider the setting in which the Knudsen number $\Kn$ is small. We show
that the two distributions converge in the Kullback-Leibler divergence 
and the Hellinger distance as $\Kn \to 0$. 
The three subsections concern, in turn, the following convergence results
as  $\Kn \to 0$:
\begin{itemize}
\item[1.] convergence of the forward map $\mathcal{G}^\RTE(\sigma)$ to $\mathcal{G}^\DE(\sigma)$ for a fixed list of $(\phi_k,l_j)$;
\item[2.] convergence of the KL divergence between $\mu^{\mathfrak y}_\RTE(\sigma)$ and $\mu^{\mathfrak y}_\DE(\sigma)$ to zero;
\item[3.] convergence of the Hellinger metric $\mu^{\mathfrak y}_\RTE(\sigma)$ and $\mu^{\mathfrak y}_\DE(\sigma)$ to zero.
\end{itemize}

Before these three pieces of analysis, recall that
the forward measurement operators for the RTE and DE
are defined in \eqref{eq:GRTE} and \eqref{eq:GDE} respectively and
that Bayes' theorem~\eqref{bayestheorem} delivers the formulae for
the posterior distributions in \eqref{eq:bayes1}--\eqref{eq:bayes3}. 

\subsection{Convergence Of The Forward Map}
\label{cofm}
For simplicity we assume that $l_j$ is the 
linear funtional that takes corresponding to evaluation at 
point $x_j\in\partial\Omega$: 
\begin{equation}
l_j(f) = f(x_j)\,,\quad\text{where}\quad x_j\in\partial\Omega\,.\label{ptwise}
\end{equation}
Other linear functionals can be handled with similar analysis. Then
\begin{equation}\label{eqn:calG_RTE_jk}
\mathcal{G}^{\RTE}_{jk}(\sigma) = l_j(\mathcal{H}^\RTE(\sigma)\phi_k) = -\frac{1}{C_d\Kn}\int_{\Gamma(x_j)} (v\cdot n)f(x_j,v)\rd{v}\,,
\end{equation}
and
\begin{equation}\label{eqn:calG_DE_jk}
\mathcal{G}^{\DE}_{jk}(\sigma) = l_j(\mathcal{H}^\DE(\sigma)\xi_k) = \frac{1}{\sigma(x_j)}\frac{\partial\rho}{\partial n_{x_j}}(x_j)\,.
\end{equation}
where $f$ and $\rho$ are the solutions to the RTE and the DE with $\phi_k$ and $\xi_k$ as incoming conditions, respectively.

We now have the following proposition:
\begin{prop}\label{prop:conv_G_nonlinear}
Assume that
$\phi_k(x,v) = \xi(x) -\Kn \frac{1}{\sigma(x)}v(x)\cdot\nabla\xi_k(x)$. Then,
under Assumption~\ref{ass:sigma}, the forward maps $\mathcal G^\RTE$ and $\mathcal G^\DE$ satisfy
\begin{equation}\label{eqn:uniform_bound_diff}
\sup_{\sigma \in \mathcal{A}}\|\mathcal{G}^\RTE(\sigma) - \mathcal{G}^\DE(\sigma) \|_{\infty} \leq \frac{C_\mathcal{A}}{C_d} \Kn\,.
\end{equation}
Furthermore, there is a constant $C$ that only depends on $C_1$ and $\Omega$ so that
\begin{equation}\label{uniform_bound}
\max\Big\{\sup_{\sigma \in \mathcal{A}}\|\mathcal{G}^\RTE(\sigma)\|_\infty\,, \sup_{\sigma \in \mathcal{A}}\|\mathcal{G}^\DE(\sigma) \|_{\infty}\Big\} \leq C\,.
\end{equation}
\end{prop}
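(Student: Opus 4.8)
The plan is to deduce both bounds directly from Theorem~\ref{thm:convergence} together with the representations \eqref{eqn:calG_RTE_jk} and \eqref{eqn:calG_DE_jk}, exploiting the second-order compatible boundary condition assumed in the hypothesis. First I would fix $\sigma \in \mathcal{A}$ and a single experiment index $k$, and write $\psi := f - \rho + \frac{\Kn}{\sigma} v\cdot\nabla\rho$, which by the second bullet of Theorem~\ref{thm:convergence} satisfies $\|\psi\|_{L_\infty(\Omega\times\mathbb{S}^{d-1})} \le C_\mathcal{A}\Kn^2$ uniformly over $\mathcal{A}$, since the boundary data $\phi_k$ has exactly the form required there (with $\xi$ replaced by $\xi_k$). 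Then on $\Gamma(x_j)$ I substitute $f = \rho - \frac{\Kn}{\sigma}v\cdot\nabla\rho + \psi$ into \eqref{eqn:calG_RTE_jk}. Since $\rho$ is velocity-independent and $\int_{\Gamma(x_j)}(v\cdot n)\,\rd v = 0$ (the full sphere integral of $v\cdot n$ vanishes by symmetry, and $\Gamma(x_j)$ is the whole sphere of directions at the single point $x_j$), the leading $\rho$ term drops out. The $-\frac{\Kn}{\sigma}v\cdot\nabla\rho$ term, upon multiplication by $-\frac{1}{C_d\Kn}(v\cdot n)$ and integration, produces $\frac{1}{C_d\sigma(x_j)}\int_{\Gamma(x_j)}(v\cdot n)(v\cdot\nabla\rho)\,\rd v = \frac{1}{\sigma(x_j)}\partial_{n}\rho(x_j)$ using \eqref{eq:needthis} (note $\int_{\mathbb{S}^{d-1}} v_i v_j\,\rd v = C_d\delta_{ij}$), which is precisely $\mathcal{G}^\DE_{jk}(\sigma)$ from \eqref{eqn:calG_DE_jk}. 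The remainder is $-\frac{1}{C_d\Kn}\int_{\Gamma(x_j)}(v\cdot n)\psi\,\rd v$, bounded in absolute value by $\frac{1}{C_d\Kn}\cdot\|\psi\|_{L_\infty} \le \frac{C_\mathcal{A}}{C_d}\Kn$, using $|v\cdot n|\le 1$ and that $\rd v$ is a normalized measure. Taking the supremum over $j,k$ and over $\sigma\in\mathcal{A}$ gives \eqref{eqn:uniform_bound_diff}.

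For the second claim \eqref{uniform_bound}, the bound on $\mathcal{G}^\DE$ follows immediately from Proposition~\ref{prop:uniform_bound_Neumann}: each component $\mathcal{G}^\DE_{jk}(\sigma) = \frac{1}{\sigma(x_j)}\partial_n\rho(x_j)$ is bounded by $\|\frac{1}{\sigma}\partial_n\rho\|_\infty$, which is uniformly bounded over $\mathcal{A}$ by a constant depending only on $C_1$ and $\Omega$ (one must check that $\xi_k$ satisfies Assumption~\ref{ass:sigma}, which we assume of the chosen boundary data). The bound on $\mathcal{G}^\RTE$ then follows by the triangle inequality: $\|\mathcal{G}^\RTE(\sigma)\|_\infty \le \|\mathcal{G}^\DE(\sigma)\|_\infty + \|\mathcal{G}^\RTE(\sigma) - \mathcal{G}^\DE(\sigma)\|_\infty \le C + \frac{C_\mathcal{A}}{C_d}\Kn$, which is bounded by a constant (for $\Kn$ in any bounded range, in particular $\Kn \le 1$). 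Absorbing constants gives \eqref{uniform_bound}.

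The main obstacle is bookkeeping at the boundary: one must verify carefully that $\Gamma(x_j) = \Gamma_+(x_j)\cup\Gamma_-(x_j)$ is genuinely the full velocity sphere $\mathbb{S}^{d-1}$ at the fixed point $x_j$, so that the symmetry identities $\int(v\cdot n)\,\rd v = 0$ and $\int (v\cdot n)(v\cdot\nabla\rho)\,\rd v = C_d\,\partial_n\rho$ apply cleanly — and that the measure normalization constants $C_d$ match between \eqref{eqn:calG_RTE_jk} and \eqref{eq:needthis} so the DE term emerges with the correct coefficient. A secondary subtlety is that Theorem~\ref{thm:convergence} is stated for a single boundary condition $\phi$, whereas here we have a list $\{\phi_k\}$; since the estimate is uniform in the data (the constant $C_\mathcal{A}$ depends only on $C_1$ through the bound on $\xi_k$ and its gradient, which we control via Assumption~\ref{ass:sigma}), applying the theorem experiment-by-experiment and then taking the finite maximum over $k$ is legitimate. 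Everything else is a routine estimate.
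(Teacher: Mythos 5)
Your proposal is correct and follows essentially the same route as the paper: both substitute $f=\rho-\frac{\Kn}{\sigma}v\cdot\nabla\rho+r$ with the remainder controlled by the second bullet of Theorem~\ref{thm:convergence}, use $\int_{\mathbb S^{d-1}}(v\cdot n)\,\rd v=0$ and \eqref{eq:needthis} to recover $\mathcal{G}^\DE_{jk}$ exactly, bound the residual term by $\frac{C_\mathcal{A}}{C_d}\Kn$, and obtain \eqref{uniform_bound} from Proposition~\ref{prop:uniform_bound_Neumann} plus the triangle inequality. Your observation that the integral over $\Gamma(x_j)$ may be replaced by one over $\mathbb{S}^{d-1}$ because $v\cdot n$ vanishes on the complement is exactly the remark the paper makes at the start of its proof.
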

\begin{proof}
To show the first item it is enough to prove that for every $j$ and $k$,
\[
|\mathcal{G}^\RTE_{jk}(\sigma) - \mathcal{G}_{jk}^\DE(\sigma)| \leq \frac{C_\mathcal{A}}{C_d} \Kn\,.
\]
Note that, for any $y$, ${\mathbb S}^{d-1}\backslash\Gamma(y)$ is the set on
which $(v \cdot n)=0$. Hence, employing
\eqref{eq:needthis}, \eqref{eqn:calG_RTE_jk} and~\eqref{eqn:calG_DE_jk},
and defining
$$r=f-\rho +\frac{\Kn}{\sigma}v\cdot \nabla\rho,$$
we then have
\begin{align}\label{gdiff}
|\mathcal{G}^\RTE_{jk}(\sigma) - \mathcal{G}_{jk}^\DE(\sigma)| & =\left| \frac{1}{\sigma} \frac{\partial \rho}{\partial n} (x_j)+ \frac{1}{C_d \Kn} \int_{\Gamma(x_j)} v\cdot n f \rd v \right|\\
& =\left| \frac{1}{\sigma} \frac{\partial \rho}{\partial n}(x_j) + \frac{1}{C_d \Kn} \int_{{\mathbb S}^{d-1}} (v\cdot n) (\rho-\frac{\Kn}{\sigma}v\cdot\nabla\rho + r) \rd v \right|\nonumber\\
& =\left| \frac{1}{\sigma} \frac{\partial \rho}{\partial n}(x_j) + \int_{{\mathbb S}^{d-1}} \frac{1}{C_d}\left[\frac{-1}{\sigma}(v\cdot n)(v\cdot\nabla\rho)+ \frac{r}{\Kn}\right] \rd v \right|\nonumber\\
& \leq\left| \frac{1}{\sigma} \frac{\partial \rho}{\partial n}(x_j) - \frac{1}{\sigma} \frac{\partial \rho}{\partial n}(x_j)\right|+\frac{C_\mathcal{A}}{C_d}\Kn  = \frac{C_\mathcal{A}}{C_d}\Kn\,.\nonumber
\end{align}
Here we used Theorem~\ref{thm:convergence} which states
\begin{equation*}
\|r\|_{L_\infty(\Omega\times\mathbb{S}^{d-1})}=\left\|f - \left(\rho - \frac{\Kn}{\sigma}v\cdot\nabla\rho\right)\right\|_{L_\infty(\Omega\times\mathbb{S}^{d-1})} \leq C_\mathcal{A}\Kn^2\,,
\end{equation*}
which concludes the proof of equation~\eqref{eqn:uniform_bound_diff}. Equation~\eqref{uniform_bound} is a direct consequence of Proposition~\ref{prop:uniform_bound_Neumann}
and by combining the conclusion of 
Proposition~\ref{prop:uniform_bound_Neumann} with equation~\eqref{eqn:uniform_bound_diff}.
\end{proof}

\subsection{Convergence In Kullback-Leibler Divergence} \label{sec:KL}
We use the convergence of the forward map to show the convergence in the posterior distribution using the Kullback-Leibler divergence. 

\begin{thm} \label{t:KL1}
Let the assumptions of Proposition \ref{prop:conv_G_nonlinear}, together
with Assumption \ref{a:add}, hold. Then
\[
\KL(\mu^{\mathfrak y}_\RTE, \mu^{\mathfrak y}_\DE)\leq \mathcal{O}(\Kn)\,.
\]
\end{thm}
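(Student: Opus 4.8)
The plan is to bound the KL divergence directly from its definition, exploiting the fact that both posteriors share the same prior $\mu_0$ and differ only through their likelihoods. Writing $\Phi^\RTE(\sigma) = \tfrac{1}{2\gamma^2}\|\mathfrak y - \mathcal{G}^\RTE(\sigma)\|_2^2$ and similarly $\Phi^\DE$, we have $\frac{d\mu^{\mathfrak y}_\RTE}{d\mu^{\mathfrak y}_\DE}(\sigma) = \frac{Z^\DE}{Z^\RTE}\exp\bigl(\Phi^\DE(\sigma) - \Phi^\RTE(\sigma)\bigr)$, so that
\[
\KL(\mu^{\mathfrak y}_\RTE, \mu^{\mathfrak y}_\DE) = \log\frac{Z^\DE}{Z^\RTE} + \int_{\mathcal{A}} \bigl(\Phi^\DE(\sigma) - \Phi^\RTE(\sigma)\bigr)\,\mu^{\mathfrak y}_\RTE(d\sigma).
\]
The key analytic input is that, by Proposition~\ref{prop:conv_G_nonlinear}, $\sup_{\sigma\in\mathcal{A}}\|\mathcal{G}^\RTE(\sigma) - \mathcal{G}^\DE(\sigma)\|_\infty \le C_\mathcal{A}\Kn/C_d$ and both forward maps are uniformly bounded on $\mathcal{A}$ by a constant $C$. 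The first step is therefore to show the pointwise bound $|\Phi^\DE(\sigma) - \Phi^\RTE(\sigma)| \le C'\Kn$ uniformly in $\sigma\in\mathcal{A}$: since $\|a\|^2 - \|b\|^2 = \langle a-b, a+b\rangle$ with $a = \mathfrak y - \mathcal{G}^\DE(\sigma)$ and $b = \mathfrak y - \mathcal{G}^\RTE(\sigma)$, we get $a - b = \mathcal{G}^\RTE(\sigma) - \mathcal{G}^\DE(\sigma) = \mathcal{O}(\Kn)$ while $\|a+b\| \le 2\|\mathfrak y\| + 2C$ is bounded, so the difference of the quadratic misfits is $\mathcal{O}(\Kn)$ with a constant depending on $C_1$, $\Omega$, $\gamma$, and $\|\mathfrak y\|$.

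The second step is to control the normalization ratio. Since $Z^\RTE = \int_{\mathcal{A}} e^{-\Phi^\RTE(\sigma)}\mu_0(d\sigma)$ and likewise for $Z^\DE$, and $|\Phi^\RTE - \Phi^\DE| \le C'\Kn$ pointwise, we obtain $e^{-C'\Kn} \le Z^\RTE/Z^\DE \le e^{C'\Kn}$ (using $\mu_0(\mathcal{A}) = 1$), hence $|\log(Z^\DE/Z^\RTE)| \le C'\Kn$. One also should note $Z^\RTE, Z^\DE \in (0,1]$ are bounded below away from zero uniformly in $\Kn$ because $\Phi^\RTE, \Phi^\DE$ are bounded above on $\mathcal{A}$ (again via the uniform bound on the forward maps), so all the manipulations are legitimate. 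Combining the two steps, both terms in the displayed expression for $\KL$ are $\mathcal{O}(\Kn)$, and since $\mu^{\mathfrak y}_\RTE$ is a probability measure the integral term inherits the pointwise bound on $\Phi^\DE - \Phi^\RTE$.

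\textbf{Main obstacle.} The calculation is essentially routine given Proposition~\ref{prop:conv_G_nonlinear}; the only point requiring a little care is the uniform lower bound on $Z^\RTE$ (and $Z^\DE$), which is what makes $\log(Z^\DE/Z^\RTE)$ well-behaved and prevents the KL divergence from blowing up. This in turn relies on the fact that the forward maps are \emph{uniformly} bounded over the admissible set $\mathcal{A}$ — precisely the content of the second assertion \eqref{uniform_bound} of Proposition~\ref{prop:conv_G_nonlinear}, which is why that bound was established — together with Assumption~\ref{a:add} guaranteeing $\mu_0$ is a genuine probability measure supported on $\mathcal{A}$. A secondary bookkeeping point is tracking the dependence of the constant on $\|\mathfrak y\|$ and $\gamma$: the estimate is $\Kn$-uniform for fixed data, which is all that is claimed.
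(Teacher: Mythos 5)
Your proof is correct and follows essentially the same strategy as the paper's: split $\log\frac{d\mu^{\mathfrak y}_\RTE}{d\mu^{\mathfrak y}_\DE}$ into the normalization ratio and the log-likelihood ratio, and bound both by $\mathcal{O}(\Kn)$ using the forward-map estimates of Proposition~\ref{prop:conv_G_nonlinear} together with the identity $\|a\|^2-\|b\|^2=\langle a-b,a+b\rangle$. Your treatment of the normalization constants is in fact marginally cleaner than the paper's: you sandwich $Z^\RTE/Z^\DE$ between $e^{\pm C'\Kn}$ directly from the uniform bound on $\Phi^\DE-\Phi^\RTE$, whereas the paper first bounds $|Z^\RTE-Z^\DE|$ via the Lipschitz constant of the exponential and then invokes the uniform lower bounds on $Z^\DE$ and $\mu^\sigma_\DE(\mathfrak y)$ to pass to the logarithm.
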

\begin{proof}
We first note that, over the set $\mathcal {A}$ which contains the
support of the common prior measure $\mu_0$, the likelihoods
$\mu^{\sigma}_\RTE(\mathfrak y)$ and $\mu^{\sigma}_\DE(\mathfrak y)$ are bounded
uniformly from above and below. Hence the measures  
$\mu^{\mathfrak y}_\RTE$ and $\mu^{\mathfrak y}_\DE$ are mutually absolutely
continuous and have densities with respect to one another.
In particular we may define
\begin{equation}\label{eqn:KL}
\KL(\mu^{\mathfrak y}_\RTE, \mu^{\mathfrak y}_\DE) = \int_{\mathcal{A}} \Bigl(\log \frac{d\mu^{\mathfrak y}_\RTE}{d\mu^{\mathfrak y}_\DE}(\sigma)\Bigr)\rd\mu^{\mathfrak y}_\DE(\sigma)\,,
\end{equation}
where $\sigma\in \mathcal{A}$. Clearly $\mu^{\mathfrak y}_\DE$ has no $\Kn$ 
dependence, and so it suffices to show that $\log \frac{d\mu^{\mathfrak y}_\RTE}{d\mu^{\mathfrak y}_\DE}$ is $\mathcal O (\Kn)$, uniformly on $\mathcal{A}$.
Using \eqref{eq:bayes1}--\eqref{eq:bayes3}, we find
\[%\label{eqn:logpi}
\log \frac{d\mu^{\mathfrak y}_\RTE}{d\mu^{\mathfrak y}_\DE}(\sigma) = \log \left( \frac{\mu_0(\sigma)\mu^{\sigma}_\RTE(\mathfrak y)}{Z^\RTE} \frac{Z^\DE}{\mu_0(\sigma)\mu_\DE^{\sigma}(\mathfrak y)}\right)  = \log \frac{Z^\DE}{Z^\RTE} + \log \frac{\mu^{\sigma}_\RTE(\mathfrak y)}{\mu^{\sigma}_\DE(\mathfrak y)}\,.
\]
We will show that both terms are $\mathcal O(\Kn)$. 
Recalling~\eqref{eq:bayes2} we see that
\begin{align*}%\label{eqn:dis_pi}
\big| \mu^{\sigma}_\RTE(\mathfrak y)-\mu^{\sigma}_\DE(\mathfrak y)|
=&~ \Bigg| \exp \left( - \frac{\| \mathfrak y - \mathcal G^\RTE(\sigma)\|^2}{2\gamma^2}\right) - \exp \left( -\frac{\| \mathfrak y - \mathcal G^\DE(\sigma)\|^2}{2\gamma^2}\right)\Bigg| \nonumber\\
 \leq &~ c\Big| \|\mathfrak y - \mathcal G^\RTE(\sigma)\|^2 - \| \mathfrak y-\mathcal G^\DE(\sigma)\|^2\Big|\,,
% \leq &~  \max\{\exp \left( - \frac{\| \mathfrak y - \mathcal G^\RTE(\sigma)\|^2}{\gamma^2}\right)\,,\exp \left( -\frac{\| \mathfrak y - \mathcal G^\DE(\sigma)\|^2}{\gamma^2}\right)\}\Big| \|\mathfrak y - \mathcal G^\RTE(\sigma)\|^2 - \| \mathfrak y-\mathcal G^\DE(\sigma)\|^2\Big|\,.
% = &~\left(2\mathfrak y - \mathcal G^\RTE(\sigma) - \mathcal G^\DE(\sigma)\right)^\top\left(\mathcal G^\RTE(\sigma)-\mathcal G^\DE(\sigma)\right)\nonumber\\
%= &~\mathcal{O}(\Kn)\nonumber\,.
\end{align*}
where $c<\infty$ is the Lipschitz constant for $\exp(-|x|/2\gamma^2).$
Now note that
\[
 \|\mathfrak y - \mathcal G^\RTE(\sigma)\|^2 - \| \mathfrak y-\mathcal G^\DE(\sigma)\|^2=-\left(2\mathfrak y - \mathcal G^\RTE(\sigma) - \mathcal G^\DE(\sigma)\right)^\top\left(\mathcal G^\RTE(\sigma)-\mathcal G^\DE(\sigma)\right)\,,
\]
and according to Proposition \ref{prop:conv_G_nonlinear},
\begin{equation}\label{eqn:lip}
\sup_{\sigma \in \mathcal{A}}\|c\left(2\mathfrak y - \mathcal G^\RTE(\sigma) - \mathcal G^\DE(\sigma)\right)\|_\infty < \infty\,,
\end{equation}
we deduce that 
\[
\sup_{\sigma \in \mathcal{A}}\big|  \mu^{\sigma}_\RTE(\mathfrak y)-\mu^{\sigma}_\DE(\mathfrak y)\big| = \mathcal{O}(\Kn)\,.
\]
%
%where we have used the Lipschitz continuity of the exponential function, with $\mathcal C$ denoting the Lipschitz constant. Note that the constant is independent of the specific media $\sigma$ chosen as long as Assumption~\ref{ass:sigma} is satisfied. We then used Proposition~\ref{prop:conv_G_nonlinear} that states:
%\[
%\Big|\mathcal G^\RTE(\sigma)(\phi;y)-\mathcal G^\RTE(\sigma)(\phi;y)\Big|\leq \|\mathcal G^\RTE(\sigma)(\phi)-\mathcal G^\RTE(\sigma)(\phi)\|_{L_\infty(\partial\Omega)} = \mathcal{O}(\Kn)\,.
%\]
%Note that this order is independent of $\sigma$.
Using the definition of the normalization factor and noting that $\int\rd\mu_0(\mathcal{A}) = 1$, we also have
\[
\big|Z^\RTE -Z^\DE \big| \leq \int_{\mathcal A} \big|\mu^{\sigma}_\RTE(\mathfrak y) -\mu^{\sigma}_\DE(\mathfrak y) \big| \rd\mu_0(\sigma) =\mathcal{O}(\Kn)\,.
\]
Noting that $Z^\DE$ and $\mu^{\sigma}_\DE(\mathfrak y)$ are bounded from
below uniformly with respect to $\sigma \in \mathcal{A}$, we deduce 
from the two preceding displays that
$$\sup_{\sigma \in {\mathcal A}}\Big|\log \frac{Z^\DE}{Z^\RTE} + \log \frac{\mu^{\sigma}_\RTE(\mathfrak y)}{\mu^{\sigma}_\DE(\mathfrak y)}\Big|=\mathcal{O}(\Kn)\,$$
which completes the proof.
\end{proof}

\subsection{Convergence In Hellinger Metric}\label{sec:Hellinger}
Convergence in the Hellinger metric has a very similar proof  to that used
in KL divergence.

\begin{thm}\label{thm:hellinger_nonlinear}
Let the assumptions of Proposition \ref{prop:conv_G_nonlinear}, together
with Assumption \ref{a:add}, hold. Then
\[
d_\text{Hell}(\mu^{\mathfrak y}_\RTE,\mu^{\mathfrak y}_\DE) \leq \mathcal{O}(\Kn)\,.
\]
\end{thm}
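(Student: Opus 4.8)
The plan is to follow the route already used for the Kullback--Leibler bound in Theorem~\ref{t:KL1}. The first step is the same observation: on the admissible set $\mathcal{A}$, which contains the support of the common prior $\mu_0$, the likelihoods $\mu^{\sigma}_\RTE(\mathfrak y)$ and $\mu^{\sigma}_\DE(\mathfrak y)$ are bounded above and below by positive constants uniformly in $\sigma$, and hence so are the normalizing constants $Z^\RTE$ and $Z^\DE$. This lets me take $\mu_0$ itself as the common reference measure $\lambda$ in the definition of $d_\text{Hell}$, so that Bayes' theorem~\eqref{eq:bayes1} gives $\frac{d\mu^{\mathfrak y}_\RTE}{d\mu_0}(\sigma)=\frac{1}{Z^\RTE}\mu^{\sigma}_\RTE(\mathfrak y)$ together with the analogous DE identity, and therefore
\[
d_\text{Hell}(\mu^{\mathfrak y}_\RTE,\mu^{\mathfrak y}_\DE)^2=\frac12\int_{\mathcal{A}}\Bigl(\frac{\sqrt{\mu^{\sigma}_\RTE(\mathfrak y)}}{\sqrt{Z^\RTE}}-\frac{\sqrt{\mu^{\sigma}_\DE(\mathfrak y)}}{\sqrt{Z^\DE}}\Bigr)^{2}\rd\mu_0(\sigma).
\]

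Next I would split the integrand by adding and subtracting $\frac{1}{\sqrt{Z^\RTE}}\sqrt{\mu^{\sigma}_\DE(\mathfrak y)}$, i.e.
\[
\frac{\sqrt{\mu^{\sigma}_\RTE}}{\sqrt{Z^\RTE}}-\frac{\sqrt{\mu^{\sigma}_\DE}}{\sqrt{Z^\DE}}=\frac{1}{\sqrt{Z^\RTE}}\bigl(\sqrt{\mu^{\sigma}_\RTE}-\sqrt{\mu^{\sigma}_\DE}\bigr)+\sqrt{\mu^{\sigma}_\DE}\,\Bigl(\frac{1}{\sqrt{Z^\RTE}}-\frac{1}{\sqrt{Z^\DE}}\Bigr),
\]
then use $(a+b)^2\le 2a^2+2b^2$ and $\int_{\mathcal{A}}\mu^{\sigma}_\DE(\mathfrak y)\rd\mu_0=Z^\DE$ to obtain
\[
d_\text{Hell}^2\le\frac{1}{Z^\RTE}\int_{\mathcal{A}}\bigl(\sqrt{\mu^{\sigma}_\RTE}-\sqrt{\mu^{\sigma}_\DE}\bigr)^2\rd\mu_0+Z^\DE\Bigl(\frac{1}{\sqrt{Z^\RTE}}-\frac{1}{\sqrt{Z^\DE}}\Bigr)^2.
\]
To control the first term I would use that, since the likelihoods stay in a fixed compact subinterval of $(0,\infty)$ on $\mathcal{A}$, the map $t\mapsto\sqrt t$ is Lipschitz there, so $|\sqrt{\mu^{\sigma}_\RTE(\mathfrak y)}-\sqrt{\mu^{\sigma}_\DE(\mathfrak y)}|\le C\,|\mu^{\sigma}_\RTE(\mathfrak y)-\mu^{\sigma}_\DE(\mathfrak y)|$, and the right side is $\mathcal O(\Kn)$ uniformly on $\mathcal{A}$ by exactly the estimate established inside the proof of Theorem~\ref{t:KL1} (which came from Proposition~\ref{prop:conv_G_nonlinear} via the forward-map bound~\eqref{eqn:uniform_bound_diff} and the Lipschitz continuity of $\exp(-\,\cdot\,/2\gamma^2)$); squaring and integrating against the probability measure $\mu_0$ gives $\mathcal O(\Kn^2)$. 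For the second term, the same compactness makes $t\mapsto t^{-1/2}$ Lipschitz on the range of $Z^\RTE,Z^\DE$, so $|(Z^\RTE)^{-1/2}-(Z^\DE)^{-1/2}|\le C\,|Z^\RTE-Z^\DE|=\mathcal O(\Kn)$, again reusing $|Z^\RTE-Z^\DE|\le\int_{\mathcal{A}}|\mu^{\sigma}_\RTE-\mu^{\sigma}_\DE|\rd\mu_0=\mathcal O(\Kn)$ from the proof of Theorem~\ref{t:KL1}; its square is $\mathcal O(\Kn^2)$. Adding the two pieces yields $d_\text{Hell}^2=\mathcal O(\Kn^2)$, hence $d_\text{Hell}=\mathcal O(\Kn)$.

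There is no genuinely difficult step here; everything reduces to the forward-map estimate~\eqref{eqn:uniform_bound_diff} and the uniform upper and lower bounds on the likelihoods. The one point that requires care is that the goal is the \emph{linear} rate $\mathcal O(\Kn)$, not the $\mathcal O(\sqrt{\Kn})$ that would follow automatically from bounding $d_\text{Hell}$ by a constant multiple of $\sqrt{\KL(\mu^{\mathfrak y}_\RTE,\mu^{\mathfrak y}_\DE)}$ and invoking Theorem~\ref{t:KL1}. Obtaining the sharper rate is precisely why one estimates the Hellinger distance directly and uses the \emph{Lipschitz} — rather than merely $\tfrac12$-H\"older — bound for the square root, which is legitimate only because, on the prior support $\mathcal{A}$, both the likelihoods and the normalizing constants are bounded away from $0$ and $\infty$. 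The remaining work is the routine bookkeeping displayed above.
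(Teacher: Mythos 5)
Your proof is correct and follows essentially the same route as the paper's: the identical decomposition into the two terms (likelihood difference at fixed normalization, plus the normalization-constant difference), each bounded by $\mathcal O(\Kn^2)$ via the forward-map estimate \eqref{eqn:uniform_bound_diff} and the uniform upper and lower bounds on the likelihoods over $\mathcal{A}$. The only cosmetic difference is that you apply the Lipschitz property of $t\mapsto\sqrt{t}$ on the compactly contained range of the likelihood values, whereas the paper applies the Lipschitz property of $\exp(-\,\cdot\,/2\gamma^{2})$ directly to the squared data misfits; both are valid for the same underlying reason.
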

\begin{proof}
We first recall the definition of the Hellinger distance between two distributions in section \ref{bayes}, using $\lambda=\mu_0$ as the reference measure:
\begin{align*}
d_\text{Hell}(\mu^{\mathfrak y}_\RTE,\mu^{\mathfrak y}_\DE)^2 = \frac{1}{2} \int_{\mathcal A} \left( \sqrt{\frac{d\mu^{\mathfrak y}_\RTE}{d\mu_0}}(\sigma) - \sqrt{\frac{d\mu^{\mathfrak y}_\DE}{d\mu_0}}(\sigma)\right)^2 \mu_0(\rd \sigma)\,.
\end{align*}
Following~\cite{stuart} we obtain
\begin{align}\label{ineqn:hellinger}
d_\text{Hell}(\mu^{\mathfrak y}_\RTE,\mu^{\mathfrak y}_\DE)^2 &=\frac{1}{2} \int_{\mathcal A} \left[ \frac{1}{\sqrt{Z^\RTE}} \exp \left(\frac{-1}{2\gamma^2} \| \mathfrak y - \mathcal G^\RTE(\sigma)\|^2_2 \right) - \frac{1}{\sqrt{Z^\DE}}\exp \left( \frac{-1}{2\gamma^2} \| \mathfrak y - \mathcal G^\DE(\sigma)\|^2_2 \right)\right]^2 \rd \mu_0\\  \nonumber
&\leq I_1 + I_2\,,
\end{align}
where
\[
I_1  = \frac{1}{Z^\RTE} \int_{\mathcal A} \left[ \exp \left( - \frac{1}{2\gamma^2} \| \mathfrak y - \mathcal G^\RTE(\sigma)\|^2 \right) - \exp \left( -\frac{1}{2\gamma^2} \| \mathfrak y - \mathcal G^\DE(\sigma)\|^2\right) \right]^2  \rd\mu_0(\sigma)\,,
\]
%\begin{align*}
%I_1 & = \frac{1}{Z^\RTE} \int \left( \exp \left( - \frac{1}{2} \| b - \mathcal G^\RTE(\sigma)(\phi)\|_{L_2(\partial\Omega)}^2 \right) - \exp \left( -\frac{1}{2} \| b - \mathcal G^\DE(\sigma)(\phi)\|_{L_2(\partial\Omega)}^2\right) \right)^2 \pi(\sigma) \rd \sigma\,,
%\end{align*}
and
\[
I_2  = \left| (Z^\RTE)^{-1/2} - (Z^\DE)^{-1/2}\right|_2^2 \int_{\mathcal A} \exp\left( -\frac{1}{2\gamma^2}\| \mathfrak y - \mathcal G^\DE(\sigma)\|_2^2\right) \rd\mu_0(\sigma)\,.
\]
With the same argument, we have
\begin{align}\label{eqn:nonlinear_I1}
I_1 & \leq \frac{c}{Z^\RTE} \int_{\mathcal A} \| \mathcal G^\RTE - \mathcal G^\DE\|^2_2\,\|\mathcal G^\RTE+\mathcal G^\DE-2\mathfrak y\|_2^2\,\rd \mu_0 = \mathcal{O}(\Kn^2)\,,
\end{align}
where we have used
\[
\| \mathcal G^\RTE - \mathcal G^\DE \|_\infty<C_\mathcal{A}\Kn/C_d,
\]
and the Lipschitz argument as in~\eqref{eqn:lip}. To deal with $I_2$, we notice that
\begin{equation}\label{ineqn:I_2}
I_2  \leq \max\left\{ (Z^\RTE)^{-3},(Z^\DE)^{-3}\right\}\left| Z^\RTE - Z^\DE\right|^2 \int_{\mathcal A} \exp\left( - \frac{1}{2} \| \mathfrak y - \mathcal G^\DE(\sigma)\|^2_2 \right) \rd \mu_0(\sigma)\,,
\end{equation}
using the fact that
\[
|(Z^\RTE)^{-1/2} - (Z^\DE)^{-1/2}|^2 \leq \max\{(Z^\RTE)^{-3},(Z^\DE)^{-3}\}|Z^\RTE - Z^\DE|^2.
\]
According to the definition of $Z^{\RTE,\DE}$, we have
\begin{align*}
|Z^\RTE - Z^\DE| &\leq \int_{\mathcal A} \left|\exp\left( -\frac{1}{\gamma^{2}} \| \mathfrak y - \mathcal G(\sigma)^\RTE\|^2_2\right) - \exp\left( - \frac{1}{\gamma^{2}} \| \mathfrak y - \mathcal G(\sigma)^\DE\|^2_2\right)\right| \rd \mu_0(\sigma)\\
&\leq c\int_{\mathcal A} \Big| \| \mathfrak y - \mathcal G^\RTE(\sigma)\|^2_2-\|\mathfrak y - \mathcal G^\DE(\sigma)\|^2_2 \Big| \rd \mu_0(\sigma)\\
&\leq c\int_{\mathcal A}  \| \mathcal G^\RTE - \mathcal G^\DE\|_2\|\mathcal G^\RTE+\mathcal G^\DE-2\mathfrak y\|_2\rd\mu_0(\sigma)\\
&=C_\mathcal{A}\Kn/C_d\,,
\end{align*}
where we used~\eqref{eqn:lip}. Plugging these back in~\eqref{ineqn:I_2}, we have
\[
I_2 =\mathcal O(\Kn^2).
\]
Together with the boundedness of $I_1$ and the inequality~\eqref{ineqn:hellinger}, we conclude
\[
\Hell(\mu^{\mathfrak y}_\RTE,\mu^{\mathfrak y}_\DE) = \mathcal O(\Kn)\,.
\]
\end{proof}

\section{Linearized Inverse Problems} 
\label{sec:L}
In this section we study approximations of the two Bayesian
inverse problems in the linearized setting. We show asymptotic closeness 
of the posterior distributions in the small Knudsen number regime $\Kn \ll 1.$
Equations~\eqref{eqn:gammaRTE}--\eqref{eqn:fredhom_rte2} and~\eqref{eqn:fredhom_diff} give rise to the following approximate inverse problems:
\begin{equation}\label{eqn:forward_linear}
\mathfrak y^\RTE_{\text{lin}} = {G}^\RTE(w) + \eta\,,\quad\text{and}\quad \mathfrak y^\DE_{\text{lin}} = {G}^\DE(w) +  \eta\,,
\end{equation}
where
\[
\mathfrak y^\RTE_{\text{lin}}  = \mathfrak y - \mathcal G^{\RTE}(\log(u_0))\,, \quad\text{and}\quad \mathfrak y^\DE_{\text{lin}}  = \mathfrak y - \mathcal G^{\DE}(\log(u_0))\,.
\]
is a vector of length $JK$ and can be regarded as the linearized data. It can be obtained by subtracting $\mathfrak y$, the collected measurements with $\{\phi_k, \, k=1,\dots,K\}$ being the input data and $\{l_j,\, j=1, \dots, J\}$ being the pointwise evaluation operator, as defined in~\eqref{ptwise}, and $\mathcal G^{\RTE,\DE}(\log(u_0))$, the background data that is precomputed using~\eqref{rtelin} or~\eqref{eqn:diff_lin} with the same input and measurement operator, and the background medium $u_0$.

Assuming $\eta\sim\mathcal{N}(0,\gamma^2\mathbb{I})$ as always, now we have the likelihood functions:
\[
\mmu^w_\RTE(\mathfrak y^\RTE) = \exp\left(-\frac{1}{2\gamma^{2}}\|\mathfrak y_\text{lin}^\RTE - {G}^\RTE(w)\|^2_2\right)\quad\text{and}\quad\mmu^w_\DE(\mathfrak y^\DE) = \exp\left(-\frac{1}{2\gamma^{2}}\|\mathfrak y_\text{lin}^\DE - {G}^\DE(w)\|^2_2\right)
\]
The two models use the same prior distribution function $\mmu_0(w)$, satisfying
\begin{equation*}
\int_{\mathcal{C}^{3}(\Omega)}1\rd\mmu_0=1\,.
\end{equation*}
The posterior distributions are then
\begin{equation}
\mmu^{\mathfrak y}_\RTE(dw) = \frac{1}{Z^\RTE}\mmu^{w}_\RTE(\mathfrak y^\RTE)\mmu_0(dw)\,, \quad\text{and}\quad \mmu^{\mathfrak y}_\DE(w) = \frac{1}{Z^\DE}\mmu^{w}_\DE(\mathfrak y^\DE)\mmu_0(dw)\,,
\end{equation}
where $Z^\RTE$ and $Z^{\DE}$ are the normalization factors.

\subsection{Convergence Of Linearized Forward Map}
\label{colfm}
We first show the convergence of the interpreters $\gamma^{\RTE,\DE}$, which will allow us to show the convergence of the two forward maps.

\begin{prop}\label{prop:conv_linear_gamma} Assume $u_0\in\mathcal{A}_u$, then for $\Kn$ sufficiently small, $\gamma^\RTE \to \gamma^\DE$. More specifically, for every $j$ and $k$,
\begin{equation}
\| \gamma^\RTE_{jk} - \gamma^\DE_{jk} \|_{L_\infty(\Omega)} \leq C \Kn^2\,.
\end{equation}
Here the constant $C$ only depends on $C_\mathcal{A}$ and $C_1$.
\end{prop}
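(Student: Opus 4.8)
The plan is to feed the diffusion-limit expansion of Theorem~\ref{thm:convergence} --- together with the analogous expansion for the adjoint transport equation~\eqref{rteadj}, whose formal diffusion limit is~\eqref{adjdiff} --- into the definition~\eqref{eqn:gammaRTE} of $\gamma^\RTE_{jk}$ and read off the leading behaviour in $\Kn$. Write $a=e^{-u_0}$ for the background diffusion coefficient and let $\rho_{\lin,k},\rho_{g,j}$ denote the solutions of~\eqref{eqn:diff_lin} (with data $\xi_k$) and~\eqref{adjdiff} (with data $\delta_{x_j}$). First I would record the interior Hilbert expansions
\[
f_{\lin,k}=\rho_{\lin,k}-\Kn\,a\,v\cdot\nabla\rho_{\lin,k}+\Kn^2 f_{k,2}+\cdots,\qquad
g_j=\rho_{g,j}+\Kn\,a\,v\cdot\nabla\rho_{g,j}+\Kn^2 g_{j,2}+\cdots,
\]
where $f_{k,2},g_{j,2}$ are the explicit second correctors --- obtained by inverting $\mathcal L$ on the mean-zero part, hence quadratic in $v$ --- and the remainders are bounded in $L_\infty(\Omega\times\mathbb S^{d-1})$ uniformly over $u_0\in\mathcal{A}_u$. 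The sign flip of the first corrector of $g_j$ relative to that of $f_{\lin,k}$ is simply because reversing $v$ turns the adjoint transport operator in~\eqref{rteadj} back into the forward one in~\eqref{rtelin}.

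Next I would substitute these expansions into~\eqref{eqn:gammaRTE}. Since~\eqref{rtelin} gives the exact identity $\mathcal L f_{\lin,k}=(\Kn/e^{u_0})\,v\cdot\nabla f_{\lin,k}$, one power of $\Kn$ is absorbed, so it is enough to expand $\int_{\mathbb S^{d-1}}g_j\,(v\cdot\nabla f_{\lin,k})\,\rd v$ in $\Kn$ and show it equals $C_d\,\Kn\,\gamma^\DE_{jk}+\mathcal O(\Kn^3)$. The $\Kn^0$ term is $\rho_{g,j}\,\braket{v}_v\cdot\nabla\rho_{\lin,k}=0$; the $\Kn^1$ term, using the second-moment identity~\eqref{eq:needthis}, equals $C_d\big(a\,\nabla\rho_{g,j}\cdot\nabla\rho_{\lin,k}-\rho_{g,j}\,\nabla\!\cdot(a\nabla\rho_{\lin,k})\big)$, whose divergence summand vanishes because $\rho_{\lin,k}$ solves~\eqref{eqn:diff_lin}, leaving exactly $C_d\gamma^\DE_{jk}$; and the $\Kn^2$ term --- the decisive one for obtaining the $\Kn^2$ rate --- vanishes because each of its contributions (from $\rho_{g,j}\,v\!\cdot\!\nabla f_{k,2}$, $g_{j,1}\,v\!\cdot\!\nabla f_{k,1}$ and $g_{j,2}\,v\!\cdot\!\nabla\rho_{\lin,k}$, after using the explicit quadratic form of $f_{k,2},g_{j,2}$) is an angular integral of an odd monomial $v_iv_jv_\ell$ over $\mathbb S^{d-1}$, hence zero. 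Multiplying by the prefactor $-e^{u_0}/(C_d\Kn^2)$ of~\eqref{eqn:gammaRTE} then gives $\gamma^\RTE_{jk}=\gamma^\DE_{jk}+\mathcal O(\Kn^2)$ pointwise; taking $\|\cdot\|_{L_\infty(\Omega)}$ and tracking the constants through Assumption~\ref{ass:sigma} yields the stated bound with $C=C(C_\mathcal{A},C_1)$, uniformly in $j,k$ since those indices only select the fixed finite boundary data.

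The main obstacle is to make the two expansions rigorous at the required order. Theorem~\ref{thm:convergence} gives the $f_{\lin,k}$ expansion only through $\mathcal O(\Kn^2)$, so I would have to push the boundary-layer-corrected Hilbert expansion a further order or two (routine, but needing higher-order compatibility of $\phi_k$ with $\xi_k$, and for one organization of the argument a derivative bound on the remainder) and check that the remainder bounds are uniform over $\mathcal{A}_u$. More delicate is $g_j$: the boundary datum $\delta_{x_j}$ in~\eqref{rteadj} is singular, so Theorem~\ref{thm:convergence} does not apply to $g_j$ directly and both $\rho_{g,j}$ and $g_j$ --- hence $\gamma^\RTE_{jk}$ and $\gamma^\DE_{jk}$ themselves --- blow up near $x_j$; the expansion and the cancellations above must therefore be carried out on compact subsets of $\Omega$ bounded away from $\partial\Omega$, or a Green's-function version of the diffusion limit must be established so that the singular parts of $g_j$ and $\rho_{g,j}$ cancel in the difference $\gamma^\RTE_{jk}-\gamma^\DE_{jk}$. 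Everything else --- the moment computations, the appeal to the elliptic equation for $\rho_{\lin,k}$, and the bookkeeping of the $\mathcal O(\Kn^3)$ remainder --- is routine.
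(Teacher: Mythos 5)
Your route is the same one the paper takes: insert the diffusion-limit (Hilbert) expansions of $f_{\lin,k}$ and $g_j$ into the definition \eqref{eqn:gammaRTE}, use $\mathcal L\rho=0$, $\int_{\mathbb S^{d-1}}v\,\rd v=0$ and the second-moment identity \eqref{eq:needthis} to isolate the surviving $\mathcal O(\Kn^2)$ term, and absorb the prefactor $-e^{u_0}/(C_d\Kn^2)$ to recover $\gamma^\DE_{jk}=e^{-u_0}\nabla\rho_{\lin,k}\cdot\nabla\rho_{g,j}$. The difference is that you carry the bookkeeping one order further, and in doing so you expose the two points at which the paper's own (very terse) proof is incomplete. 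First, the paper stops at the two-term expansion supplied by Theorem~\ref{thm:convergence}, whose remainders are only $\mathcal O(\Kn^2)$; after pairing with the $\mathcal O(\Kn)$ terms of the other factor and dividing by $\Kn^2$, this yields only an $\mathcal O(\Kn)$ bound on $\gamma^\RTE_{jk}-\gamma^\DE_{jk}$. Obtaining the stated $\Kn^2$ rate requires exactly what you propose: a third-order expansion with $\mathcal O(\Kn^3)$ remainders together with the observation that all order-$\Kn^3$ cross terms are odd angular moments $\int v_iv_jv_\ell\,\rd v=0$ (the second correctors being even in $v$). Second, you correctly flag that the adjoint solution $g_j$ has the singular boundary datum $\delta_{x_j}$, so Theorem~\ref{thm:convergence} --- whose constants are controlled by $\|\xi\|_{L_\infty(\partial\Omega)}$ under Assumption~\ref{ass:sigma} --- does not apply to it as stated; some mollification of the datum, a localization away from $x_j$, or a Green's-function version of the diffusion limit is genuinely needed, and the paper is silent on this. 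So your proposal is not a different method but a more honest account of what it takes to reach the claimed rate; the remaining work you identify (higher-order compatible boundary data for the extended expansion, uniformity of the new remainders over $\mathcal A_u$, and the treatment of the delta datum) is real and is not discharged by the paper either.
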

\begin{proof}
Recall the definition of $\gamma_{jk}$ in~\eqref{eqn:gammaRTE}
\begin{align*}
\gamma^\RTE_{jk}(x) = -\frac{e^{u_0}}{C_d\Kn^2} \int_{\mathbb S^{d-1}} g_j(x,v) \mathcal L f_{\lin,k}(x,v)\rd v\,,
\end{align*}
where $g_j$ and $f_{\lin,k}$ solve~\eqref{rteadj} and~\eqref{rtelin} with $\delta_{y_j}$ and $\phi_k$ as boundary conditions. We further recall Theorem~\ref{thm:convergence}, so that we have
\begin{equation*}
\|g_j -\rho_{g_j} -\eps e^{-u_0} v\cdot\nabla\rho_{g_j} \|_{L_\infty(\Omega\times\mathbb{S}^{d-1})} <C_\mathcal{A}\Kn^2\,,\quad\text{and}\quad \|f_{\lin,k} -\rho_{f_k} +\eps e^{-u_0} v\cdot\nabla\rho_{f_k} \|_{L_\infty(\Omega\times\mathbb{S}^{d-1})} <C_\mathcal{A}\Kn^2\,,
\end{equation*}
where $\rho_{g,j}$ and $\rho_{\lin,k}$ solve
\begin{equation*}
-\nabla\cdot(e^{-u_0} \nabla\rho)  = 0 \,,
\end{equation*}
with boundary condition $\delta_{y_j}$ and $\xi_k$ respectively. Recalling $\mathcal{L}\rho = 0$ for all $\rho$, and that $\int_{\mathbb S^{d-1}}v\rd{v} = 0$, then
\[
\|\gamma^\RTE_{jk}(x) -\gamma^\DE_{jk}(x)\|_{L_\infty(\Omega)} =\|\gamma^\RTE_{jk}(x) -e^{-u_0} \nabla\rho_{g,j}\cdot\nabla\rho_{\lin,k} \|_{L_\infty(\Omega)} = \mathcal{O}(\Kn^2)\,.
\]
We conclude the proof since this holds for every $j$ and $k$.

\end{proof}
We emphasize that $\gamma^\RTE$ is uniquely determined by $g$ and $f_\lin$ that solve~\eqref{rteadj} and~\eqref{rtelin}, and that the two equations depend merely on $u_0$. So the convergence holds true as long as $u_0\in\mathcal{A}_u$, and there is no requirement on $w$.

\begin{cor} \label{c:H2}
For any fixed $u_0\in\mathcal{A}_u$, assume $w\in C^3$, if $\Kn$ significantly small, then $G^\RTE \to G^\DE$. More specifically,
\begin{equation}
\| G^\RTE - G^\DE \|_{\infty} \leq C \Kn^2\|w\|_{L_2(\Omega)}\,,
\end{equation}
where $G^{\RTE,\DE}$ are two vectors of length $JK$, and $C$ only depends on $C_\mathcal{A}$, $J$ and $K$.
\end{cor}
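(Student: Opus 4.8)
The plan is to deduce the corollary directly from Proposition~\ref{prop:conv_linear_gamma} together with the Fredholm representation of the linearized forward maps. Recall from~\eqref{eqn:fredhom_rte} and~\eqref{eqn:fredhom_diff} that, for each pair $(j,k)$, the components of the two forward maps are $G^\RTE_{jk}(w)=\langle\gamma^\RTE_{jk},w\rangle_x$ and $G^\DE_{jk}(w)=\langle\gamma^\DE_{jk},w\rangle_x$, where $\langle\cdot,\cdot\rangle_x$ denotes the $L_2(\Omega)$ inner product in the spatial variable. By linearity of the inner product, $G^\RTE_{jk}(w)-G^\DE_{jk}(w)=\langle\gamma^\RTE_{jk}-\gamma^\DE_{jk},w\rangle_x$, so the problem reduces to estimating this pairing.

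First I would apply the Cauchy--Schwarz inequality in $L_2(\Omega)$ to each component, obtaining $|G^\RTE_{jk}(w)-G^\DE_{jk}(w)|\le\|\gamma^\RTE_{jk}-\gamma^\DE_{jk}\|_{L_2(\Omega)}\,\|w\|_{L_2(\Omega)}$. Since $\Omega$ is bounded, $\|\gamma^\RTE_{jk}-\gamma^\DE_{jk}\|_{L_2(\Omega)}\le|\Omega|^{1/2}\|\gamma^\RTE_{jk}-\gamma^\DE_{jk}\|_{L_\infty(\Omega)}$, and Proposition~\ref{prop:conv_linear_gamma} bounds the last quantity by $C\Kn^2$ with $C$ depending only on $C_\mathcal{A}$ and $C_1$ (the fixed finite list of boundary data $\phi_k,\xi_k$ and functionals $l_j$ enters only through these constants). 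Hence each component of $G^\RTE-G^\DE$ is bounded by $C'\Kn^2\|w\|_{L_2(\Omega)}$ with $C'$ uniform in $(j,k)$. Taking the maximum over the finite index set $\{1,\dots,J\}\times\{1,\dots,K\}$ — or, if one prefers the Euclidean norm on $\mathbb R^{JK}$, the $\ell^2$ norm, which only costs a further factor $\sqrt{JK}$ — yields $\|G^\RTE-G^\DE\|_\infty\le C\Kn^2\|w\|_{L_2(\Omega)}$. Note that $w\in C^3(\Omega)$ on the bounded domain $\Omega$ guarantees $\|w\|_{L_2(\Omega)}<\infty$, so the bound is meaningful.

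There is no serious obstacle here: the corollary is essentially Cauchy--Schwarz applied to Proposition~\ref{prop:conv_linear_gamma}. The only mild point to verify — and it is already absorbed into that proposition — is that the kernel estimate is uniform over the finitely many experiments and detectors; since the $\phi_k$, $\xi_k$ and $l_j$ are fixed in advance and each pair $(g_j,f_{\lin,k})$ solves an equation depending only on the admissible background $u_0\in\mathcal{A}_u$, this uniformity is immediate, and no requirement is placed on the fluctuation $w$ beyond membership in $C^3(\Omega)$.
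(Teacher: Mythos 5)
Your proposal is correct and follows essentially the same route as the paper's own proof: write each component difference as $\langle\gamma^\RTE_{jk}-\gamma^\DE_{jk},w\rangle_x$, apply Cauchy--Schwarz, bound the $L_2$ norm of the kernel difference by its $L_\infty$ norm via Proposition~\ref{prop:conv_linear_gamma}, and then pass to the vector norm over the finite index set at the cost of a factor depending on $J$ and $K$. No gaps.
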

\begin{proof}
According to the definition of $G^{\RTE,\DE}$,
\[
G^\RTE_{jk} - G^\DE_{jk} = \langle \gamma^\RTE_{jk} - \gamma^\DE_{jk}\,,w\rangle\leq \|\gamma^\RTE_{jk} - \gamma^\DE_{jk}\|_{L_2(\Omega)}\|w\|_{L_2(\Omega)}\,.
\]
We conclude using the result from Proposition~\ref{prop:conv_linear_gamma}, and that $\|\gamma^\RTE_{jk} - \gamma^\DE_{jk}\|_{L_2(\Omega)}\lesssim\|\gamma^\RTE_{jk} - \gamma^\DE_{jk}\|_{L_\infty(\Omega)}$ for:
\[
\|G^\RTE - G^\DE\|_2 = \sqrt{\sum_{jk}|G^\RTE_{jk} - G^\DE_{jk}|^2} \leq \sqrt{JK}C\Kn^2\|w\|_{L_2(\Omega)}\,.
\]
%Since both $u_0+w$ and $u_0$ are in $\mathcal{A}_u$, then according to equation~\eqref{eqn:bound_u}, $\|w\|_{L_\infty(\Omega)}<2C_2$, which conclude the corollary.
\end{proof}

\subsection{Convergence In Hellinger Metric}
\label{cihm}
The proof of the following result is a straightforward extension 
of Theorem~\ref{thm:hellinger_nonlinear} and hence we only sketch it.

\begin{thm} \label{t:H2}
Consider the linearized setting with ${u}_0\in\mathcal{A}_u$ and assume
that $\nu_0$ is a centred Guassian measure supported on the
space $C^3(\Omega).$ Then
the Hellinger distance between the posterior distribution 
$\mmu^{\mathfrak y}_\RTE$ and $\mmu^{\mathfrak y}_\DE$ is bounded by 
$\mathcal{O}(\Kn)$: 
\[
d_\text{Hell}(\mmu^{\mathfrak y}_\RTE,\mmu^{\mathfrak y}_\DE) \leq \mathcal{O}(\Kn^2)\,.
\]
\end{thm}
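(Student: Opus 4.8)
\textbf{Proof plan for Theorem \ref{t:H2}.}
The strategy is to mirror the proof of Theorem~\ref{thm:hellinger_nonlinear} verbatim, with $\mathcal{G}^{\RTE,\DE}$ replaced by the linearized maps $G^{\RTE,\DE}$ and with $\sigma$ replaced by $w$, using the linearized error estimate from Corollary~\ref{c:H2} in place of Proposition~\ref{prop:conv_G_nonlinear}. First I would write the Hellinger distance with $\lambda=\nu_0$ as the reference measure and split $d_\text{Hell}(\mmu^{\mathfrak y}_\RTE,\mmu^{\mathfrak y}_\DE)^2 \le I_1 + I_2$ exactly as in \eqref{ineqn:hellinger}, where $I_1$ collects the difference of the two exponentiated likelihoods (with a common normalization $1/Z^\RTE$) and $I_2$ collects the difference of the normalization constants $|(Z^\RTE)^{-1/2}-(Z^\DE)^{-1/2}|^2$ times the integral of the DE likelihood. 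For $I_1$, the Lipschitz bound on $\exp(-|x|/2\gamma^2)$ gives $I_1 \lesssim (Z^\RTE)^{-1}\int_{C^3}\|G^\RTE(w)-G^\DE(w)\|_2^2\,\|G^\RTE(w)+G^\DE(w)-2\mathfrak y_\text{lin}\|_2^2\,\rd\nu_0(w)$; Corollary~\ref{c:H2} controls the first factor by $C\Kn^2\|w\|_{L_2(\Omega)}$, so the integrand is $\mathcal{O}(\Kn^4)\|w\|_{L_2}^2(\cdots)$, and $I_1 = \mathcal{O}(\Kn^4)$ after integration. Similarly $|Z^\RTE - Z^\DE| \le c\int_{C^3}\|G^\RTE-G^\DE\|_2\,\|G^\RTE+G^\DE-2\mathfrak y_\text{lin}\|_2\,\rd\nu_0 = \mathcal{O}(\Kn^2)$, hence $I_2 = \mathcal{O}(\Kn^4)$, and taking square roots yields $d_\text{Hell} = \mathcal{O}(\Kn^2)$.

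The one genuinely new ingredient, and the step I expect to be the main obstacle, is verifying the integrability estimates needed to make the two integrals over $w$ finite and uniformly bounded in $\Kn$. In the nonlinear case the prior $\mu_0$ was supported on the bounded set $\mathcal{A}$, so the likelihoods were bounded above and below by positive constants and every integral was trivially finite; here $\nu_0$ is a centred Gaussian on $C^3(\Omega)$, which is \emph{not} compactly supported, so $\|w\|_{L_2(\Omega)}$ is unbounded on the support. I would handle this by noting that $G^{\RTE,\DE}$ are bounded linear functionals of $w$ (each $\gamma^{\RTE,\DE}_{jk}\in L_2(\Omega)$ by Proposition~\ref{prop:conv_linear_gamma} and the regularity of the adjoint/background solutions), so $\|G^{\RTE,\DE}(w)+\text{const}\|_2$ grows at most linearly in $\|w\|_{C^3}$, and the factor $\|G^\RTE(w)-G^\DE(w)\|_2^2 \lesssim \Kn^4\|w\|_{L_2}^2$. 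Thus the $I_1$ integrand is bounded by $\Kn^4$ times a polynomial in $\|w\|_{C^3}$ (degree $\le 4$), which is $\nu_0$-integrable by Fernique's theorem for Gaussian measures; the same reasoning bounds the integrals appearing in $|Z^\RTE-Z^\DE|$ and in $I_2$. I would also record that $Z^\RTE$ is bounded below uniformly in $\Kn$: since $G^\RTE(w)\to G^\DE(w)$ uniformly on bounded sets and $Z^\DE>0$ is fixed, for $\Kn$ small enough $Z^\RTE \ge Z^\DE/2 > 0$, so the $(Z^\RTE)^{-1}$ and $(Z^\RTE)^{-3}$ prefactors are harmless.

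Assembling these pieces, the proof reads: establish $Z^\RTE \gtrsim 1$ uniformly for small $\Kn$; apply the Lipschitz-plus-Fernique argument to get $I_1 = \mathcal{O}(\Kn^4)$; apply the same argument to get $|Z^\RTE - Z^\DE| = \mathcal{O}(\Kn^2)$ and hence $I_2 = \mathcal{O}(\Kn^4)$; conclude via \eqref{ineqn:hellinger} that $d_\text{Hell}(\mmu^{\mathfrak y}_\RTE,\mmu^{\mathfrak y}_\DE) = \mathcal{O}(\Kn^2)$. Since the paper says only to sketch this, I would present the Hellinger splitting and the two $\mathcal{O}(\Kn^2)$ estimates on $\|G^\RTE-G^\DE\|$ and $|Z^\RTE-Z^\DE|$ explicitly, and relegate the Fernique integrability bookkeeping to a remark that the Gaussian prior has moments of all orders while the forward maps are at most linearly growing in $w$.
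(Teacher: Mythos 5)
Your proposal is correct and follows essentially the same route as the paper's sketch: the same $I_1+I_2$ Hellinger splitting, Corollary~\ref{c:H2} giving the factor $\Kn^2\|w\|_{L_2(\Omega)}$, and Fernique's theorem to control the resulting polynomial moments of $\|w\|$ under the Gaussian prior. The only cosmetic difference is that the paper keeps $\mathfrak y^\RTE_\lin$ and $\mathfrak y^\DE_\lin$ distinct (they differ by $\mathcal{O}(\Kn)$ through the background subtraction) where you write $2\mathfrak y_\text{lin}$, but this discrepancy sits in the bounded second factor and does not affect the order.
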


\begin{proof}[Sketch Proof]

The primary difference of this theorem with Theorem~\ref{thm:hellinger_nonlinear} is that the data $\mathfrak y$ is subtracted by $\mathcal{G}^{\RTE}(\log(u_0))$ and $\mathcal{G}^\DE(\log(u_0))$, and that the linear operator can be made explicit: $G^{\RTE,\DE} = \langle\gamma^{\RTE,\DE},w\rangle$. As a result, the estimates for $I_1$ and $I_2$ change accordingly. The proof is rather similar to that for Theorem~\ref{thm:hellinger_nonlinear}, so we omit the details and only estimate $I_1$ here:
\begin{align*}
I_1 & \leq \frac{c}{Z^\RTE} \int_{C^{3}(\Omega)} \| G^\RTE(w) - G^\DE(w)\|^2_2\,\|G^\RTE(w)+ G^\DE(w)-\mathfrak y^\DE_\lin-\mathfrak y^\RTE_\lin\|_2^2\,\rd \nu_0(dw)\\
&\leq C\Kn^4\int_{C^{3}(\Omega)}\|w\|^2_{L_2(\Omega)}\bigl(1+\|w\|^2_{L_2(\Omega)}\bigr)\rd\nu_0\,.
\end{align*}
For the second inequality to hold true, we first use the conclusion from Corollary~\ref{c:H2}, and to bound the second term, we simply use:
\[
G^\RTE(w) \leq\langle\gamma^\DE\,,w\rangle + C\Kn^2\|w\|_{L_2(\Omega)}\,,\quad\mathcal G^{\RTE}(\log(u_0))\leq\mathcal G^{\DE}(\log(u_0))+C\Kn\,,
\]
to obtain
\begin{align*}
\|G^\RTE(w)+ G^\DE(w)-\mathfrak y^\DE_\lin-\mathfrak y^\RTE_\lin\|_2&\leq \|2\langle\gamma^\DE\,,w\rangle - 2\mathcal G^{\DE}(\log(u_0)) -2\mathfrak y\|_2 + C\Kn^2\|w\|_{L_2(\Omega)} +C\Kn\\
&\leq 2\|\mathcal G^{\DE}(\log(u_0)) +\mathfrak y\|_2 +2\|\langle\gamma^\DE\,,w\rangle\|_2+  C\Kn^2\|w\|_{L_2(\Omega)} +C\Kn\\
&\leq C+C\|w\|_{L_2(\Omega)}\,.
\end{align*}

Application of the Fernique theorem \cite{da2014stochastic} shows that
we have $\int_{C^{3}}\|w\|^p_{L_2(\Omega)}\rd\nu_0$ bounded by a constant 
(independent of $\Kn$) for any $p$ and that
\begin{align*}
I_1 \leq C\Kn^4\,.
\end{align*}

The estimate for $I_2$ is very similar, and therefore
\begin{align*}
d_\text{Hell}(\nu^{\mathfrak y}_\RTE,\nu^{\mathfrak y}_\DE)^2 \leq I_1 + I_2 = \mathcal{O}(\Kn^4)\,,
\end{align*}
which leads to the conclusion of Theorem~\ref{t:H2}.
\end{proof} 

Comparing the preceding theorem with Theorem~\ref{thm:hellinger_nonlinear}, a very interesting phenomenon we immediately see is that the convergence in the linearized setting has a higher rate. This higher rate is a direct consequence of the convergence in $\gamma$ in which the $\mathcal{O}(\Kn)$ cancels due to the symmetry between the forward model and the adjoint.

%Comparing this theorem with Theorem~\ref{thm:hellinger_nonlinear}, a very interesting phenomenon we immediately see is that the convergence in the linearized setting has a higher rate. Mathematically this higher rate is a direct consequence of the convergence in $\gamma$, who, by definition include the information from both the forward and the adjoint problem, and the $\mathcal{O}(\Kn)$ term is canceled out due to the symmetry between the forward and the adjoint.

%
%According to Proposition~\ref{prop:conv_G_nonlinear},
%\[
%\|\mathcal{G}^{\RTE}(\log(u_0)) - \mathcal{G}^{\DE}(\log(u_0))\|_\infty \leq C_\mathcal{A}\Kn/C_d\,,
%\]
%and thus we have an error term
%\[
%\|\mathcal{G}^{\RTE}(\log(u_0) )+ G^\RTE(w )- \mathcal{G}^{\DE}(\log(u_0)) - G^\DE(w)\|_\infty \leq C_\mathcal{A}\Kn/C_d + C\Kn^2\,.
%\]
%Follow the same proof in that of \eqref{eqn:nonlinear_I1}
%leading to the convergence of the Hellinger distance of $\mathcal{O}(\Kn)$.

\subsection{Implications For Posterior Convergence} \label{ifpc}
In the linear setup, if the prior distribution and the likelihood functions are both Gaussian functions, the posterior distribution is also Gaussian, and is 
thus completely determined by its mean and covariance, or the leading two moments. In our case, $u_0\in\mathcal{A}_u$, and $w\in C^3$, and the prior is 
supported in $C^3(\Omega)$ for $w$. Thus distances between the posterior
distributions computed using the RTE and DE can be estimated from distances
between means and covariances. To this end, consider the following lemma.

\begin{lem}[Lemma 7.14 from~\cite{hellinger}] \label{expbound}
Let $\mmu$ and $\mmu'$ be two probability measures on a separable Banach space $X$. Assume also that $f: X \to E$, where $(E, \| \cdot \|)$ is a separable Banach space, is measurable and has second moments with respect to both $\mmu$ and $\mmu'$. Then
\[
\| \mathbb E^\mmu f - \mathbb E^{\mmu'}f \| \leq 2 \left( \mathbb E^\mmu \| f\|^2 + \mathbb E^{\mmu'} \| f\|^2\right)^{\frac{1}{2}} \Hell(\mmu,\mmu').
\]
Furthermore, if $E$ is a separable Hilbert space and $f: X \to E$ also has fourth moments, then
\[
\| \mathbb E^\mmu (f\otimes f) - \mathbb E^{\mmu'}(f\otimes f) \| \leq 2 \left( \mathbb E^\mmu \|f \|^4 + \mathbb E^{\mmu'} \| f \|^4 \right)^{\frac{1}{2}} \Hell(\mmu,\mmu'),
\]
where the operator norm on $E$ is employed.
\end{lem}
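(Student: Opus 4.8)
The plan is to prove both inequalities by introducing a common reference measure and then running a Cauchy--Schwarz argument in the Bochner sense; in particular no absolute continuity between $\mmu$ and $\mmu'$ need be assumed. First I would set $\lambda=\tfrac12(\mmu+\mmu')$, so that $\mmu,\mmu'\ll\lambda$, and write $p=\rd\mmu/\rd\lambda$ and $p'=\rd\mmu'/\rd\lambda$. Because $X$ and $E$ are separable and $f$ has second moments under both measures, $f$ is Bochner integrable against $p\,\rd\lambda$ and against $p'\,\rd\lambda$, and
\[
\mathbb E^\mmu f-\mathbb E^{\mmu'}f=\int_X f\,(p-p')\,\rd\lambda=\int_X f\,(\sqrt p-\sqrt{p'})(\sqrt p+\sqrt{p'})\,\rd\lambda .
\]
Taking norms and using $\|\int g\,\rd\lambda\|\le\int\|g\|\,\rd\lambda$ for the Bochner integral gives, since $\sqrt p+\sqrt{p'}\ge0$,
\[
\bigl\| \mathbb E^\mmu f-\mathbb E^{\mmu'}f\bigr\|\le\int_X \|f\|\,(\sqrt p+\sqrt{p'})\,\bigl|\sqrt p-\sqrt{p'}\bigr|\,\rd\lambda .
\]

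Next I would apply the scalar Cauchy--Schwarz inequality to the factors $\|f\|(\sqrt p+\sqrt{p'})$ and $|\sqrt p-\sqrt{p'}|$, so that
\[
\bigl\| \mathbb E^\mmu f-\mathbb E^{\mmu'}f\bigr\|\le\Bigl(\int_X \|f\|^2(\sqrt p+\sqrt{p'})^2\,\rd\lambda\Bigr)^{1/2}\Bigl(\int_X(\sqrt p-\sqrt{p'})^2\,\rd\lambda\Bigr)^{1/2}.
\]
By the definition of the Hellinger distance the second factor equals $\sqrt2\,\Hell(\mmu,\mmu')$. For the first factor I would use the pointwise inequality $(\sqrt p+\sqrt{p'})^2\le2(p+p')$, which yields
\[
\int_X\|f\|^2(\sqrt p+\sqrt{p'})^2\,\rd\lambda\le2\int_X\|f\|^2(p+p')\,\rd\lambda=2\bigl(\mathbb E^\mmu\|f\|^2+\mathbb E^{\mmu'}\|f\|^2\bigr).
\]
Multiplying the two bounds gives the constant $\sqrt2\cdot\sqrt2=2$ and hence the first inequality.

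For the second inequality I would apply the first one to the map $x\mapsto f(x)\otimes f(x)$, which takes values in the separable Hilbert space of Hilbert--Schmidt operators on $E$. Since $f\otimes f$ is the rank-one operator $g\mapsto\langle f,g\rangle f$, its Hilbert--Schmidt norm (and operator norm) equals $\|f\|^2$, so $f\otimes f$ has second moments under $\mmu$ and $\mmu'$ precisely when $f$ has fourth moments; the first inequality then gives the stated bound with $\mathbb E\|f\|^2$ replaced by $\mathbb E\|f\|^4$.

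I expect the only delicate points to be measure-theoretic bookkeeping: justifying strong (Bochner) measurability and integrability so that the triangle inequality for the Bochner integral and the subsequent scalar Cauchy--Schwarz step are legitimate -- this is where separability of $X$ and $E$ together with the moment hypotheses enter -- and, in the tensor case, checking that the norm used on $E\otimes E$ is compatible with $\|f\otimes f\|=\|f\|^2$. Everything else reduces to elementary manipulations of the Hellinger integral.
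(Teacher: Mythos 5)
Your proof is correct, and the constants work out exactly (the factor $\sqrt2$ from the Hellinger normalization times the factor $\sqrt2$ from $(\sqrt p+\sqrt{p'})^2\le 2(p+p')$ gives the stated constant $2$). Note that the paper itself offers no proof of this statement: it is imported verbatim as Lemma 7.14 of the cited reference, and your argument --- common dominating measure $\lambda=\tfrac12(\mmu+\mmu')$, the factorization $p-p'=(\sqrt p-\sqrt{p'})(\sqrt p+\sqrt{p'})$, Cauchy--Schwarz, and then the application of the first bound to $f\otimes f$ viewed in the Hilbert--Schmidt space with $\|f\otimes f\|=\|f\|^2$ --- is precisely the standard proof given there, so there is nothing to compare beyond confirming that your reconstruction is faithful and complete.
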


When applied in our case, we obtain:
\begin{cor}
Let $m^{\RTE,\DE}_\post$ and $\mathcal C^{\RTE,\DE}_\post$ denote the mean function and the covariance operator computed from the posterior distribution 
of the radiative transfer and diffusion model in the linearized setting. Then
\[
\| m^\RTE_\post - m^\DE_\post \| \leq \mathcal O(\Kn^2)\,, \quad \| \mathcal C^\RTE_\post - \mathcal C^\DE_\post \| \leq \mathcal O(\Kn^2)\,.
\]
Here the norm for the mean is the standard norm on $C^3(\Omega)$ and the
norm for the covariance is the operator norm on $H^3(\Omega).$
\end{cor}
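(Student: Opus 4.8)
The plan is to obtain the corollary directly from Lemma~\ref{expbound} (Lemma 7.14 of~\cite{hellinger}) and the Hellinger bound $d_\text{Hell}(\mmu^{\mathfrak y}_\RTE,\mmu^{\mathfrak y}_\DE)\le\mathcal O(\Kn^2)$ of Theorem~\ref{t:H2}. First I would record the structural facts: in the linearized setting $G^\RTE$ and $G^\DE$ are linear and $\nu_0$ is centred Gaussian on $C^3(\Omega)$, so both posteriors $\mmu^{\mathfrak y}_\RTE$ and $\mmu^{\mathfrak y}_\DE$ are Gaussian and mutually absolutely continuous with respect to $\nu_0$, with $m^{\RTE,\DE}_\post$ and $\mathcal C^{\RTE,\DE}_\post$ their means and covariance operators. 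The key preliminary step is a uniform-in-$\Kn$ moment bound: since $d\mmu^{\mathfrak y}_\RTE/d\nu_0=(Z^\RTE)^{-1}\exp\!\bigl(-\tfrac1{2\gamma^2}\|\mathfrak y^\RTE_\lin-G^\RTE(w)\|_2^2\bigr)\le (Z^\RTE)^{-1}$, and $Z^\RTE\to Z^\DE>0$ as $\Kn\to0$ (the DE posterior is $\Kn$-independent, and the convergence of $Z^\RTE$ follows from Corollary~\ref{c:H2} exactly as in the proof of Theorem~\ref{t:H2}), the densities of both posteriors with respect to $\nu_0$ are bounded above by a $\Kn$-independent constant for all sufficiently small $\Kn$; hence by the Fernique theorem all polynomial moments of $\|w\|_{C^3}$ and $\|w\|_{H^3}$ under both posteriors are finite and bounded uniformly in $\Kn$.

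For the mean I would apply the first inequality of Lemma~\ref{expbound} with $X=E=C^3(\Omega)$ and $f=\mathrm{id}$, so that $\mathbb E^{\mmu^{\mathfrak y}_\RTE}f=m^\RTE_\post$ and $\mathbb E^{\mmu^{\mathfrak y}_\DE}f=m^\DE_\post$, giving
\[
\|m^\RTE_\post-m^\DE_\post\|_{C^3(\Omega)}\le 2\bigl(\mathbb E^{\mmu^{\mathfrak y}_\RTE}\|w\|_{C^3}^2+\mathbb E^{\mmu^{\mathfrak y}_\DE}\|w\|_{C^3}^2\bigr)^{1/2}d_\text{Hell}(\mmu^{\mathfrak y}_\RTE,\mmu^{\mathfrak y}_\DE).
\]
The prefactor is $\mathcal O(1)$ by the preliminary step and the Hellinger distance is $\mathcal O(\Kn^2)$ by Theorem~\ref{t:H2}, which yields the stated bound for the means.

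For the covariance the $f\otimes f$ part of Lemma~\ref{expbound} requires a separable Hilbert target, so I would take $E=H^3(\Omega)$ with $f$ the inclusion $C^3(\Omega)\hookrightarrow H^3(\Omega)$, which is continuous because $\Omega$ is bounded. Using $\mathbb E^{\mmu}(f\otimes f)=\mathcal C_\post+m_\post\otimes m_\post$ for each posterior (everything viewed in $H^3$), a triangle inequality in the operator norm on $H^3(\Omega)$ gives
\[
\|\mathcal C^\RTE_\post-\mathcal C^\DE_\post\|\le\bigl\|\mathbb E^{\mmu^{\mathfrak y}_\RTE}(f\otimes f)-\mathbb E^{\mmu^{\mathfrak y}_\DE}(f\otimes f)\bigr\|+\bigl\|m^\RTE_\post\otimes m^\RTE_\post-m^\DE_\post\otimes m^\DE_\post\bigr\|.
\]
The first term is $\mathcal O(\Kn^2)$ by the second inequality of Lemma~\ref{expbound} (the fourth moments $\mathbb E^{\mmu}\|w\|_{H^3}^4$ being uniformly bounded) together with Theorem~\ref{t:H2}; the second term is handled by the elementary estimate $\|a\otimes a-b\otimes b\|\le\|a-b\|(\|a\|+\|b\|)$, the embedding $\|m^\RTE_\post-m^\DE_\post\|_{H^3}\le C\|m^\RTE_\post-m^\DE_\post\|_{C^3}=\mathcal O(\Kn^2)$ from the first part, and the $\Kn$-uniform bound $\|m^{\RTE,\DE}_\post\|_{H^3}\le C$ (finiteness of the first moment of $\nu_0$ plus the uniform density bound).

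The only genuinely nontrivial point is the uniform-in-$\Kn$ control of the posterior moments, i.e. the uniform upper bound on $d\mmu^{\mathfrak y}_\RTE/d\nu_0$; this rests on $G^\RTE\to G^\DE$ and $Z^\RTE\to Z^\DE>0$, both already available through Corollary~\ref{c:H2} and the proof of Theorem~\ref{t:H2}, so I expect no new analysis to be needed. An alternative, fully self-contained route would bypass Lemma~\ref{expbound} entirely and instead substitute the bound $\|G^\RTE-G^\DE\|\le C\Kn^2\|w\|_{L_2(\Omega)}$ of Corollary~\ref{c:H2} into the closed-form Gaussian update formulae~\eqref{stuart620} for $m_\post$ and $\mathcal C_\post$; but given that Theorem~\ref{t:H2} is already in hand, the Hellinger route above is shorter.
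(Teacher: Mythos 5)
Your proposal is correct and follows essentially the same route as the paper: Lemma~\ref{expbound} applied with $f=\mathrm{id}$, uniform moment bounds obtained from domination of the posterior densities by the prior together with the Fernique theorem, and the $\mathcal O(\Kn^2)$ Hellinger bound of Theorem~\ref{t:H2}. Your treatment of the covariance is in fact more complete than the paper's (which only states that ``the same analysis is applied''), since you correctly account for $\mathbb E(f\otimes f)$ being the second moment rather than the covariance and supply the extra $m_\post\otimes m_\post$ triangle-inequality step.
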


\begin{proof} Let $f$ as in the statement of Lemma \ref{expbound} be the identity map, and the spaces $X$ and $E$ be $C^3(\Omega)$ equipped with $L_2$ norm, then $f(w) = w$. Let $\mmu$ and $\mmu'$ be the posterior distributions $\mmu^{\mathfrak y}_\RTE$ and $\mmu^{\mathfrak y}_\DE$ respectively. Then:
\[
\| m^\RTE_\post - m^\DE_\post \|_{L_2(\Omega)} =\| \mathbb E^{\mmu^{\mathfrak y}_\RTE}w-\mathbb E^{\mmu^{\mathfrak y}_\DE}w\|_{L_2(\Omega)} \leq 2 \left( \expRTEm \| w \|^2_{L_2(\Omega)} + \expDEm\| w \|^2_{L_2(\Omega)} \right)^{\frac{1}{2}} \Hell(\mmu^{\mathfrak y}_\RTE, \mmu^{\mathfrak y}_\DE)\,.
\]
Since
\[
\expRTEm \| f \|_{L_2(\Omega)}^2 = \int_{C^3(\Omega)} \|w\|^2_{L_2(\Omega)}\rd\mmu^{\mathfrak y}_{\RTE}\lesssim \int_{C^3(\Omega)} \|w\|^2_{L_2(\Omega)}\rd\mmu_0<C
\]
and that
\[
% \int_{C^3(\Omega)} 1\rd\mmu^{\mathfrak y}_{\RTE}=C^2_2\,,
\expDEm \| f \|^2 = \int_{\mathcal A} \|w\|_{L_2(\Omega)}^2\rd\mmu^{\mathfrak y}_{\DE}\lesssim  \int_{C^3(\Omega)} \|w\|^2_{L_2(\Omega)}\rd\mmu_0<C\,,
%C_2^2 \int_{\mathcal A} 1 \rd\mmu^{\mathfrak y}_{\DE}=C_2^2\,,
\]
where we have again used $\int_{\mathcal C^3(\Omega)} \|w\|^2_{L_2(\Omega)}\rd\mmu^{\mathfrak y}_{\RTE} <C$ using the Fernique theorem, and that $\rd\mmu^{\mathfrak y}_{\RTE,\DE}\lesssim\rd\mmu_0$~\cite{hellinger}, we achieve the convergence of the mean function by applying Theorem~\ref{t:H2}. The same analysis is applied to analyze the covariance.
\end{proof}

%$\mathbb E^{\mmu^{\mathfrak y}_\RTE}f = m^\RTE_\post$ and $\mathbb E^{\mmu^{\mathfrak y}_\DE}f= m^\DE_\post$. Similarly, $\expRTE (f \otimes f )= \mathcal C^\RTE_\post$ and $\expDE( f\otimes f )= \mathcal C^\DE_\post$. Then by Lemma \ref{expbound}, 
%\begin{align*}
%\| m^\RTE_\post - m^\DE_\post \| &\leq 2 \left( \expRTE \| f \|^2 + \expDE\| f \|^2 \right)^{\frac{1}{2}} \Hell(\mmu^\RTE_\post, \mmu^\DE_\post) \\
%& \leq 2  C_{\mathcal A} \eps, \\
%\| \mathcal C^\RTE_\post - \mathcal C^\DE_\post \| &\leq 2 \left( \expRTE \| f \|^4 + \expDE\| f \|^4\right)^{\frac{1}{2}} \Hell(\mmu^\RTE_\post, \mmu^\DE_\post)\\
%& \leq 2 C_{\mathcal A} \eps,
%\end{align*}
%both are obtained using Assumption \ref{ass:sigma}, Equation \ref{eqn:admissible_u}, and Theorem \ref{thm:hellinger_nonlinear}.
%

\section{Conclusion} \label{sec:C}
In this paper, we study the inverse problem of diffuse optical tomography 
to reconstruct the scattering coefficient. Partial and noisy data is
assumed, and hence a Bayesian formulation of inversion is natural. 
Two forward models are described, one employing the radiative transfer 
equation and the other employing the diffusion equation respectively. 
Multiscale analysis demonstrates that solutions of the two forward models are close in the optically thick (small Knudsen
number) regime, and this allows us to quantify the convergence of the two 
Bayesian inverse problems. In particular, we show that $\mu^{\mathfrak y}_\RTE$ and $\mu^{\mathfrak y}_\DE$, the two posterior distribution functions, are $\mathcal{O}(\Kn)$ away from each other in both Kullback-Leibler divergence sense, and the Hellinger sense, for both nonlinear and linear cases.
Forward solution of the diffusion equation is computationally less burdensome
than the radiative transfer equation, and the theory justifies using it
within Bayesian inversion algorithms where multiple forward model
evaluations are required.
We have employed a setting in which compatible boundary conditions
are used for the two forward models. It would also be of interest to
study extensions of this. However the primary stumbling block here
is the analysis of the forward problem itself.
The approach developed in this paper will 
apply to other Bayesian inverse problems
whose forward problems are close, and can be used to justify inversion
algorithms which employ an averaged or (as in this case) homogenized
approximate forward model, in order to speed-up computation.

\vspace{0.1in}

\noindent{\bf Acknowledgements}
AMS is supported by US AFOSR grant FA9550-17-1-0185. KN and QL are supported by NSF DMS 1619778, 1750488 and NSF TRIPODS 1740707.

\vspace{0.1in}

\bibliography{DOTBFCitations}

\begin{thebibliography}{10}

\bibitem{alessandrini}
G.~Alessandrini.
\newblock Stable determination of conductivity by boundary measurements.
\newblock {\em Appl. Anal.}, 27(1-3):153--172, 1988.

\bibitem{arridge}
S.~R. Arridge.
\newblock Optical tomography in medical imaging.
\newblock {\em Inverse Problems}, 15(2):R41--R93, 1999.

\bibitem{balreview}
G.~Bal.
\newblock Inverse transport theory and applications.
\newblock {\em Inverse Problems}, 25(5):053001, 48, 2009.

\bibitem{baljollivet}
G.~Bal and A.~Jollivet.
\newblock Time-dependent angularly averaged inverse transport.
\newblock {\em Inverse Problems}, 25(7):075010, 32, 2009.

\bibitem{balmonard}
G.~Bal, I~Langmore, and F.~Monard.
\newblock Inverse transport with isotropic sources and angularly averaged
  measurements.
\newblock {\em Inverse Probl. Imaging}, 2(1):23--42, 2008.

\bibitem{diff_limit_proof}
C.~Bardos, R.~Santos, and R~Sentis.
\newblock Diffusion approximation and computation of the critical size.
\newblock {\em Trans. Amer. Math. Soc.}, 284(2):617--649, 1984.

\bibitem{diff_limit}
A.~Bensoussan, L.~Lions, J, and G.C. Papanicolaou.
\newblock Boundary layers and homogenization of transport processes.
\newblock {\em Publ. Res. Inst. Math. Sci.}, 15(1):53--157, 1979.

\bibitem{borcea2002electrical}
L.~Borcea.
\newblock Electrical impedance tomography.
\newblock {\em Inverse problems}, 18(6):R99, 2002.

\bibitem{ghattas}
T.~Bui-Thanh, C.~Burstedde, O.~Ghattas, J.~Martin, G.~Stadler, and L.C. Wilcox.
\newblock Extreme-scale uq for bayesian inverse problems governed by pdes.
\newblock {\em Proceedings of the International Conference for High Performance
  Computing, Networking, Storage, and Analysis}, 2012.

\bibitem{main}
K.~Chen, Q.~Li, and L.~Wang.
\newblock Stability of stationary inverse transport equation in diffusion
  scaling.
\newblock {\em Inverse Problems}, 34(2):025004, 26, 2018.

\bibitem{choulliandstef2}
M.~Choulli and P.~Stefanov.
\newblock Inverse scattering and inverse boundary value problems for the linear
  {B}oltzmann equation.
\newblock {\em Comm. Partial Differential Equations}, 21(5-6):763--785, 1996.

\bibitem{cotter2010approximation}
S.L. Cotter, M.~Dashti, and A.M. Stuart.
\newblock Approximation of bayesian inverse problems for pdes.
\newblock {\em SIAM Journal on Numerical Analysis}, 48(1):322--345, 2010.

\bibitem{cotter2013mcmc}
S.L. Cotter, G.O. Roberts, A.M. Stuart, and D.~White.
\newblock Mcmc methods for functions: modifying old algorithms to make them
  faster.
\newblock {\em Statistical Science}, pages 424--446, 2013.

\bibitem{da2014stochastic}
G.~Da~Prato and J.~Zabczyk.
\newblock {\em Stochastic equations in infinite dimensions}, volume 152.
\newblock Cambridge university press, 2014.

\bibitem{hellinger}
M.~Dashti and A.M. Stuart.
\newblock The {B}ayesian approach to inverse problems.
\newblock {\em Handbook of Uncertainty Quantification}, 2017.

\bibitem{gilbarg1983}
D.~Gilbarg and N.~S. Trudinger.
\newblock {\em Elliptic partial differential equations of second order}, volume
  224 of {\em Grundlehren der Mathematischen Wissenschaften [Fundamental
  Principles of Mathematical Sciences]}.
\newblock Springer-Verlag, Berlin, second edition, 1983.

\bibitem{hielscher_diffusion}
A.H. Hielscher, R.~Alcouffe, and R~Barbour.
\newblock Comparison of finite-difference transport and diffusion calculations
  for photon migration in homogeneous and heterogeneous tissues.
\newblock {\em Phys. Med. Biol.}, 43(5):1285--1302, 1998.

\bibitem{kaipio2006statistical}
J.~Kaipio and E.~Somersalo.
\newblock {\em Statistical and computational inverse problems}, volume 160.
\newblock Springer Science \& Business Media, 2006.

\bibitem{hielscher1}
A.D. Klose, U.~Netz, J.~Beuthan, and A.H. Hielscher.
\newblock Optical tomography using the time-independent equation of radiative
  transfer - part 1: forward model.
\newblock {\em Journal of Quantitative Spectroscopy and Radiative Transfer},
  72, 2002.

\bibitem{hielscher2}
A.D. Klose, U.~Netz, J.~Beuthan, and A.H. Hielscher.
\newblock Optical tomography using the time-independent equation of radiative
  transfer - part 2: inverse model.
\newblock {\em Journal of Quantitative Spectroscopy and Radiative Transfer},
  72, 2002.

\bibitem{LiLaiUhlmann}
R.-Y. Lai, Q.~Li, and G.~Uhlmann.
\newblock Inverse problems for the stationary transport equation in the
  diffusion scaling.
\newblock {\em arXiv: 1808.02071}, 2018.

\bibitem{lasanen2012non}
S.~Lasanen.
\newblock Non-gaussian statistical inverse problems. part i: Posterior
  distributions.
\newblock {\em Inverse Problems \& Imaging}, 6(2):215--266, 2012.

\bibitem{lasanen2012nonb}
S.~Lasanen.
\newblock Non-gaussian statistical inverse problems. part ii: Posterior
  convergence for approximated unknowns.
\newblock {\em Inverse Problems \& Imaging}, 6(2), 2012.

\bibitem{lehtinen1989linear}
M.S. Lehtinen, L.~Paivarinta, and E.~Somersalo.
\newblock Linear inverse problems for generalised random variables.
\newblock {\em Inverse Problems}, 5(4):599, 1989.

\bibitem{layer_correction_l2}
Q.~Li, J.~Lu, and W.~Sun.
\newblock Validity and regularization of classical half-space equations.
\newblock {\em Journal of Statistical Physics}, 166(2):398--433, 2017.

\bibitem{marzouk2009stochastic}
Y.~Marzouk and D.~Xiu.
\newblock A stochastic collocation approach to bayesian inference in inverse
  problems.
\newblock {\em Communications in Computational Physics}, 6:826--847, 2009.

\bibitem{hielscher_tissue}
J.R. Mourant, J.~Freyer, A.H. Hielscher, A.~Eick, D.~Shen, and T.~Johnson.
\newblock Mechanisms of light scattering from biological cells relevant to
  noninvasive optical-tissue diagnostics.
\newblock {\em Appl. Opt.}, 37(16):3586--3593, 1998.

\bibitem{stuart2015}
F.J. Pinski, G.~Simpson, A.M. Stuart, and H.~Weber.
\newblock Algorithms for {K}ullback-{L}eibler approximation of probability
  measures in infinite dimensions.
\newblock {\em SIAM J. Sci. Comput.}, 37(6):A2733--A2757, 2015.

\bibitem{ren}
K.~Ren.
\newblock Recent developments in numerical techniques for transport-based
  medical imaging methods.
\newblock {\em Commun. Comput. Phys.}, 8(1):1--50, 2010.

\bibitem{taeb}
D.~Sanz-Alonso, A.M. Stuart, and A.~Taeb.
\newblock Inverse problems and data assimilation.
\newblock {\em arXiv preprint arXiv:1810.06191}, 2018.

\bibitem{stuart}
A.M. Stuart.
\newblock Inverse problems: a {B}ayesian perspective.
\newblock {\em Acta Numer.}, 19:451--559, 2010.

\bibitem{uhlmann}
J.~Sylvester and G.~Uhlmann.
\newblock A global uniqueness theorem for an inverse boundary value problem.
\newblock {\em Ann. of Math. (2)}, 125(1):153--169, 1987.

\bibitem{wang}
J.N. Wang.
\newblock Stability estimates of an inverse problem for the stationary
  transport equation.
\newblock {\em Ann. Inst. H. Poincar\'e Phys. Th\'eor.}, 70(5):473--495, 1999.

\bibitem{layer_correction}
L.~Wu and Y.~Guo.
\newblock Geometric correction for diffusive expansion of steady neutron
  transport equation.
\newblock {\em Communications in Mathematical Physics}, 336(3):1473--1553,
  2015.

\end{thebibliography}
\bibliographystyle{plain}

\section{Appendix}
We give the rigorous proof here for Theorem \ref{thm:convergence}. The two statements are proved in the same way, and for generality, we will only prove the second one, and the proof for the first statement, or even for higher order expansions, are easy extensions.
\begin{proof}
We repeat the RTE with a specially designed boundary condition,
\begin{equation*}
\begin{cases}
v\cdot\nabla f = \frac{\sigma}{\Kn}\mathcal Lf\\
f|_{\Gamma_-} = \xi(x) -\frac{\Kn}{\sigma} v\cdot\nabla\rho(x)
\end{cases}\,,
\end{equation*}
where $\rho$ satisfies
\begin{equation*}
\nabla\cdot(\sigma^{-1}\nabla\rho) = 0\quad x\in\Omega\,,\quad\text{with}\quad \rho|_{\partial\Omega} = \xi(x)\,.
\end{equation*}
Now we decompose the solution to the RTE as
\begin{equation}
f = f_0+\Kn f_1 + \Kn^2 f_2 + f_r\,,\nonumber
\end{equation}
where $f_0 = \rho$, $f_1 = -\frac{1}{\sigma}v\cdot\nabla\rho$, and $f_2 = \frac{1}{\sigma}\mathcal{L}^{-1}\left[(v\cdot \nabla)\frac{1}{\sigma}(v\cdot\nabla)\rho\right]$. Note that for the definition of $f_2$ to hold true, it is necessary
that 
\begin{equation*}
(v\cdot \nabla)\frac{1}{\sigma}(v\cdot\nabla)\rho\in\text{Range}\,\mathcal{L}\,,
\end{equation*}
which in turn requires
\[
\langle (v\cdot \nabla)\frac{1}{\sigma}(v\cdot\nabla)\rho\rangle_v = C_d\nabla\cdot(\frac{1}{\sigma}\nabla\rho) = 0\,.
\]

Since $\rho$ is $\frac{1}{\sigma}$-harmonic with smooth boundary condition $\|\xi\|_{L_\infty(\partial\Omega)}<C_1$, then by the maximum principle~\cite{gilbarg1983},
\begin{equation*}
\|\rho\|_{L_\infty(\Omega)}<C_1\,,\quad\text{and}\quad \|\partial_{i}\rho\|_{L_\infty(\Omega)}<C_2\quad\text{and}\quad \|\partial_{ij}\rho\|_{L_\infty(\Omega)}<C_3\,.
\end{equation*}
Then since $\mathcal{L}^{-1}$ is a bounded operator on $\NullL^\perp$, we know that both $f_1$ and $f_2$ are bounded, meaning there is a constant $C_4$ that depends on $C_1$, $C_2$ and $C_3$:
\begin{equation*}
\|f_1\|_{\infty} = \|\sigma^{-1}v\cdot\nabla\rho\|_{L_\infty(\Omega)} = \|\sigma^{-1}\|_{L_\infty(\Omega)}\|\partial_i\rho\|_{L_\infty(\Omega)} <C_4\,,
\end{equation*}
and
\[
\|f_2\|_{\infty} = \|\sigma^{-1}\mathcal{L}^{-1}\left[(v\cdot\nabla)\sigma^{-1}(v\cdot\nabla)\rho\right]\|_{L_\infty(\Omega)} \leq \|\sigma^{-1}(v\cdot\nabla)\sigma^{-1}(v\cdot\nabla)\rho\|_{L_\infty(\Omega)} < C_4\,,
\]
where we used the boundedness of $\|\sigma^{-1}\|_{L_\infty(\Omega)}<C_1$, $\|\nabla\left(\frac{1}{\sigma}\right)\|_{L_\infty(\Omega)}<C_1$, and the boundedness of the harmonic function and its derivatives.

 Plugging it back into the equation, we have
\[
v\cdot\nabla \left(\rho -\Kn \frac{1}{\sigma}v\cdot\nabla\rho +\Kn^2 f_2 + f_r\right) = \frac{\sigma}{\Kn}\mathcal{L}\left(\rho -\Kn \frac{1}{\sigma}v\cdot\nabla\rho +\frac{\Kn^2}{\sigma} \mathcal{L}^{-1}\left[(v\cdot \nabla)\frac{1}{\sigma}(v\cdot\nabla)\rho\right] + f_r\right)\,.
\]
Since $\rho$ is a constant in $v$, and is thus in $\NullL$, then $\mathcal{L}\rho = 0$. Using the definition of $\mathcal{L}$, we also have $\mathcal{L}(v\cdot \nabla \rho) = -v\cdot\nabla\rho$. Now we cancel the terms and obtain the following equation for $f_r$,
\[
 v\cdot\nabla f_r = \frac{\sigma}{\Kn}\mathcal{L}f_r- \Kn^2 v\cdot\nabla f_2\,.
\]
It is immediate that $f_r$ satisfies RTE and is equipped with a source term of $\mathcal{O}(\Kn^2)$. The boundary condition for $f_r$ is of the same order,
\[
f_r|_{\Gamma_-} = -\Kn^2f_2|_{\Gamma_-}\,.
\]
By maximum principle, the solution is bounded in $L_\infty$ by the boundary condition and the source term, so we have
\[
\|f_r\|_{L_\infty(\Omega)} \leq C_5\|\Kn^2 v\cdot\nabla f_2\|_{L_\infty(\Omega)}+\|\Kn^2f_2\|_{L_\infty(\Gamma)} = \mathcal{O}(\Kn^2)\,,
\]
where $C_5\leq e^{C_1l}$ and $l$ is the longest radius of the domain. This leads to the fact that:
\begin{equation}
\|f - \rho+\Kn v\cdot\nabla\rho\|_{L_\infty(\Omega)} =\|\Kn^2f_2+f_r\|_{L_\infty(\Omega)}\mathcal{O}(\Kn^2)\,.\nonumber
\end{equation}
We note again the constant merely depends on the boundedness of $C_1$, the upper bound of the infinite norm of $\frac{1}{\sigma}$, its derivative and $\xi$.
% To obtain the higher order, one simply needs to expand $f$ to a higher order:
%\begin{equation*}
%f = f_0+\Kn f_1 + \Kn^2 f_2 + \Kn^3 f_3 + f_r
%\end{equation*}
%where $f_3 =\mathcal{L}^{-1}\left[(v\cdot \nabla)f_2\right]$ and show $f_r$ is a term of $\mathcal{O}(\Kn^3)$ using the maximum principle.
\end{proof}

\end{document}